\tikzstyle{vertex} = [draw=black, fill=black, inner sep=0.55mm, circle]
\tikzstyle{edge} = [black, thick]
\newcommand{\labelLeft}[2]{\node[left=.05cm of #1] {#2};}
\newcommand{\labelRight}[2]{\node[right=.05cm of #1] {#2};}
\newcommand{\labelAbove}[2]{\node[above=.05cm of #1] {#2};}
\newcommand{\labelBelow}[2]{\node[below=.05cm of #1] {#2};}
\definecolor{chroma4blue}{RGB}{63,97,135}
\definecolor{chroma4green}{RGB}{65,143,126}
\definecolor{chroma4sand}{RGB}{171,166,125}
\definecolor{chroma4gray}{RGB}{199,199,199}
\definecolor{chroma5blue}{RGB}{63, 97, 135}
\definecolor{chroma5green}{RGB}{64,130,129}
\definecolor{chroma5grass}{RGB}{116,154,126}
\definecolor{chroma5sand}{RGB}{177,173,140}
\definecolor{chroma5gray}{RGB}{199,199,199}
\active\gdef@{\mkern1mu}}
\DeclareMathOperator{\bridge}{\oplus}
\DeclareMathOperator{\spokeStandAlone}{\vcenter{\hbox{\scalebox{1}{\rotatebox{90}{$\oslash$}}}}}
\DeclareMathOperator{\spoke}{\spokeStandAlone@@}
\newcommand{\kvertexConnected}[1]{uniformly \mbox{${#1}$-c}on\-nect\-ed\xspace}
\newcommand{\kedgeConnected}[1]{uniformly \mbox{${#1}$-e}dge-con\-nect\-ed\xspace}
\newcommand{\kEdgeConnected}[1]{Uniformly \mbox{${#1}$-e}dge-con\-nect\-ed\xspace}
\newcommand{\vertexConnected}{uniformly connected\xspace}
\newcommand{\edgeConnected}{uniformly edge-connected\xspace}
\newcommand{\pushright}[1]{\ifmeasuring@#1\else\omit\hfill$\displaystyle#1$\fi\ignorespaces}
\newcommand{\pushleft}[1]{\ifmeasuring@#1\else\omit$\displaystyle#1$\hfill\fi\ignorespaces}
\newlength{\negph@wd}
\DeclareRobustCommand{\negphantom}[1]{%
  \ifmmode
    \mathpalette\negph@math{#1}%
  \else
    \negph@do{#1}%
  \fi
}
\newcommand{\negph@math}[2]{\negph@do{$\m@th#1#2$}}
\newcommand{\negph@do}[1]{%
  \settowidth{\negph@wd}{#1}%
  \hspace*{-\negph@wd}%
}
\theoremstyle{definition}
\newtheorem{theorem}{Theorem}
\newtheorem{definition}[theorem]{Definition}
\newtheorem{corollary}[theorem]{Corollary}
\newtheorem{lemma}[theorem]{Lemma}
\title{\Large Uniformly connected graphs}
\author{\normalsize Frank Göring\textsuperscript{1}, Tobias Hofmann\textsuperscript{1,}*, Manuel Streicher\textsuperscript{2}}
\date{} 
\affil{\footnotesize
\textsuperscript{1}Faculty of Mathematics, Chemnitz University of Technology, Germany\\
\textsuperscript{2}Department of Mathematics, Technische Universität Kaiserslautern, Germany\\
*Corresponding Author, \texttt{tobias.hofmann@math.tu-chemnitz.de}}
\begin{document}
\maketitle

\begin{abstract}\noindent
\textbf{Abstract.} In this article we investigate the structure of \kvertexConnected{k} and \kedgeConnected{k} graphs. Whereas both types have previously been studied independent of each other, we analyze relations between these two classes. We prove that any \kvertexConnected{k} graph is also \kedgeConnected{k} for $k\le 3$ and demonstrate that this is not the case for $k>3$. Furthermore, \kvertexConnected{k} and \kedgeConnected{k} graphs are well understood for $k\le 2$ and it is known how to construct \kedgeConnected{3} graphs. We contribute here a constructive characterization of \kvertexConnected{3} graphs that is inspired by Tuttes Wheel Theorem. Eventually, these results help us to prove a tight bound on the number of vertices of minimum degree in \kvertexConnected{3} graphs.\\

\noindent
\textbf{Keywords.} uniform connectivity, independent paths, graph constructions, vertices of minimum degree\\

\noindent
\textbf{MSC Subject classification.} 05C40, 05C75, 05C07, 05D99
 
\end{abstract}

\section{Introduction}
The aim of this article is to study the structure of \vertexConnected and \edgeConnected graphs. Throughout this article all graphs are finite, undirected, and loopless, however, they may contain parallel edges. Whereas we refer to Diestel~\cite{diestel2017graph} for terminology that is not defined here, we summarize the most relevant concepts and notations at the end of this section.

A graph is said to be \emph{\kvertexConnected{k}} if between any pair of vertices the maximum number of independent paths is exactly $k\in\mathbb{N}$. Uniform edge-connectivity is defined analogously. The main contributions of this article are a proof that all \kvertexConnected{3} graphs are \kedgeConnected{3}, a constructive characterization of all \kvertexConnected{3} graphs, and a tight bound on the number of vertices of minimum degree in \kvertexConnected{3} graphs. 

Uniform connectivity was introduced by Beineke, Oellermann, and Pippert in~\cite{beineke2002average}. In their contribution they derive some basic properties of uniformly connected graphs and discuss relations to other connectivity concepts. Furthermore, they define different operations that, when applied to \vertexConnected graphs, sustain that connectivity. Although their operations yield infinite families of \vertexConnected graphs they by no means construct all such graphs. In contrast, here, we provide operations that preserve uniform $3$-connectivity and additionally can be used to obtain \emph{all} \kvertexConnected{3} graphs. Constructive characterizations are widespread in the study of connectivity concepts. Tutte characterizes all $3$-connected graphs in~\cite{tutte1961three} with his famous \emph{Wheel Theorem}. A different version of the wheel theorem characterizes all cubic, $3$-connected graphs, cf.~\cite{tutte1966connectivity}. Our characterization of \kvertexConnected{3} graphs is in a sense situated between those by Tutte as any \kvertexConnected{3} graph is $3$-connected and any cubic, $3$-connected graphs is \kvertexConnected{3}.

We illustrate the applicability of our characterization by using it to obtain a tight bound on the number of vertices of minimum degree in \kvertexConnected{3} graphs. Various such bounds for different graph classes can be found in the literature. Back in 1969, Halin~\cite{halin1969theorem} showed that there is a vertex of degree~$k$ in each minimally \mbox{$k$-connected} graph. For a graph $G$ on $n$ vertices Dirac~\cite{dirac1967minimally} showed that there are as many as \hbox{$(n+4)/3$} vertices of degree $2$ in minimally $2$-connected graphs and Halin~\cite{halin1969theorem} established that there are at least \hbox{$(2\,n+6)/5$} vertices of degree $3$ in minimally 3-connected graphs. Mader~\cite{mader1979struktur} finally gave a tight bound for general~$k$, which comprises the above cases for $k=2$ and $k=3$. While these bounds are best possible in terms of the number of vertices and~$k$, Oxley~\cite{oxley1981connectivity} used a graph's number of edges as an additional parameter to achieve even stronger bounds, which are however not tight. Finally, the long time open problem of finding a tight lower bound for general~$k$ that depends on the number of vertices and the number of edges has been resolved by Schmidt~\cite{schmidt2018tight}. All these bounds also apply to \kvertexConnected{k} graphs because they are minimally $k$-connected. However, as there are minimally $k$-connected graphs that are not \kvertexConnected{k}, we may aim for even stronger bounds. In particular we prove that \hbox{$(2n+2)/3$} is a tight lower bound for the number of vertices of minimum degree in \kvertexConnected{3} graphs.

Uniform edge-connectivity was introduced by Kingsford and Mar\c{c}ais in~\cite{kingsford2009synthesis} using the term \emph{exact} edge-connectivity. Their main contribution is a nice characterization of all \kedgeConnected{3} graphs. The authors prove basic properties and relate uniform edge-connectivity to other connectivity and regularity \mbox{concepts}. Our main contribution towards uniform edge-connectivity is to prove that any \kvertexConnected{3} graph is also \kedgeConnected{3}. We show this by proving the remarkable fact that a simple, $3$-connected graph in which two vertices are connected by four edge-disjoint paths also contains two vertices that are connected by four independent paths. The same holds true for $k=1$ and $k=2$, but is not true for $k=4$ as the graph in Figure~\ref{fig:vertexRegularNotEdgeRegular} shows.

Another motivation to study uniform connectivity of graphs comes from a branch of spectral graph theory. In that field, \emph{connectivity} and \emph{edge-connectivity matrices} are investigated. For a graph~$G$ and two vertices~$v,w\in V(G)$ an off-diagonal \mbox{$v$-$w$}~entry of such a matrix is the weight of a minimum \mbox{$v$-$w$}~separator, or minimum \mbox{$v$-$w$}~cut, respectively. The diagonal entries are defined as zero. The research on such matrices is a classical topic of combinatorial optimization since at least the seminal work of Gomory and Hu~\cite{gomoryhu1961multi}. Recently, related spectral properties received further attention. As a key topic in spectral graph theory, also the question arises for which graphs certain spectral parameters are extremal. In the article~\cite{hofmann2021}, Hofmann and Schwerdtfeger provide a tight bound on the \emph{energy} of an edge-connectivity matrix, that is the sum of the absolute values of the eigenvalues. In fact, this bound is attained for \edgeConnected graphs, which actually inspired our work on uniform connectivity.

\textbf{Outline.} In Section~\ref{sec:basic}, we examine basic properties of \kvertexConnected{k} and \kedgeConnected{k} graphs. At this point, we also take a closer look at the case~$k=2$. This leads us to Section~\ref{sec:inclusion}, where we prove the fact that for~$k\le 3$ any \kvertexConnected{k} graph is also \kedgeConnected{k}. We provide a constructive characterization of \kvertexConnected{3} graphs in Section~\ref{sec:construction}. We use these construction ideas in Section~\ref{sec:degrees} to establish bounds on the number of vertices of minimum degree and demonstrate that the obtained results can in general not be improved.

The remainder of this section recalls certain notions that are particularly important for our investigation or differ from the notation in Diestel~\cite{diestel2017graph}. Let $G$ be a graph. As we allow parallel edges, we may not identify a given edge with its endvertices. However, by abuse of notation, we refer to any particular edge joining two vertices $v,w\in V(G)$ by $vw$. If the particular choice of the edge is of relevance further comments are made accordingly. For two sets of vertices~$S,T\subset V(G)$, we denote by $E(S,T):=\{vw\in E(G):v\in S\text{ and }w\in T\}$ the edges joining vertices from $S$ and $T$. For two graphs $G$ and $H$, we define the graph $G\cup H$ as the graph with vertex set $V(G)\cup V(H)$ and edge set $E(G)\cup E(H)$. The graph $G$ is called \emph{connected} if any two vertices of $G$ are connected by a path. We refer to a maximal connected subgraph of $G$ as a \emph{component}. For two sets $A,B\subseteq V(G)$, a set $X\subseteq V(G)\cup E(G)$ \emph{separates} $A$ and $B$ if any $A$-$B$ path contains an element from $X$. The set $X$ separates two vertices $v,w\in V(G)$ if $v, w\notin X$ and $X$ separates $\{v\}$ and $\{w\}$. A subset $S\subseteq V(G)$ is a \emph{separator} if it separates two vertices $v,w\in V(G)$. In this case we say $S$ is a $v$-$w$ separator. A \emph{cutvertex} is a separator consisting of a single vertex. For $k\in\mathbb{N}$ the graph $G$ is \emph{$k$-connected} if $|V(G)|\geq k+1$ and $G-S$ is connected for any set $S$ with $|S|\leq k-1$. A \emph{cut} in $G$ is an edge set $E(S,V(G)\setminus S)$ where $S$ is a nonempty proper subset of $V(G)$. We refer to $S$ and $V(G)\setminus S$ as the \emph{sides} of the cut. For two vertices~$v,w\in V(G)$, a \emph{$v$-$w$ cut} is a cut in $G$ such that $v$ and $w$ are in different sides of the cut. A \emph{bridge} is a cut that contains exactly one edge. For $k\in\mathbb{N}$ the graph $G$ is called $k$-edge-connected if $G-F$ is connected for any set $F\subseteq E(G)$ with $|F|\leq k-1$. In order to shorten notation we use the notations $k$-cut or $k$-separator to indicate that the corresponding set contains $k\in\mathbb{N}$ elements. A maximal $2$-connected subgraph of a graph is called \emph{block}. We say two or more paths are \emph{independent} if every vertex that is contained in more than one path is an endpoint of all paths it is contained in. For a set $X\subseteq V(G)$ and a vertex $v\in V(G)\setminus X$ we call the set of some $v$-$X$ paths a \emph{$v$-$X$ fan} if any two of the paths only have the vertex $v$ in common. 

\section{Basic properties}\label{sec:basic}
This section summarizes basic structural results about \kvertexConnected{k} as well as \kedgeConnected{k} graphs and provides concise characterizations for $k\le 2$.
\begin{definition}\label{def:vertexConnected}
Let~$G$ be a graph with at least $k+1$ vertices. We call~$G$ \emph{\kvertexConnected{k}} if $k\in\mathbb{N}$ is the maximum number of independent paths between any two vertices in $V(G)$.
\end{definition}
\begin{definition}\label{def:edgeConnected}
Let~$G$ be a graph with at least two vertices. We call~$G$ \emph{\kedgeConnected{k}} if $k\in\mathbb{N}$ is the maximum number of edge-disjoint paths between any two vertices in $V(G)$.
\end{definition}
In situations where it is dispensable, we may omit the parameter~$k$ and simply use the terms \emph{\vertexConnected} or \emph{\edgeConnected}. Although it may not be apparent at first sight, Definition~\ref{def:vertexConnected} only comprises simple graphs.
\begin{lemma}\label{lem:parallelEdges}
If two vertices $v$ and $w$ of a $k$-connected graph are joined by parallel edges, then there are at least $k+1$ independent paths between $v$ and $w$.
\end{lemma}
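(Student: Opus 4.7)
The plan is to argue that the existence of a parallel edge lets us ``sacrifice'' one edge without losing $k$-connectivity, and then to reinstate it as an extra independent path. Let $e_1$ and $e_2$ be two parallel edges joining $v$ and $w$.

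First, I would show that $G-e_1$ is still $k$-connected. Since $V(G-e_1)=V(G)$, the cardinality condition $|V(G-e_1)|\ge k+1$ is inherited from $G$. Consider any vertex set $S\subseteq V(G)$ with $|S|\le k-1$; since $G$ is $k$-connected, $G-S$ is connected. If $v\in S$ or $w\in S$, then $e_1$ does not appear in $G-S$ at all, so $(G-e_1)-S=G-S$ is connected. Otherwise $v,w\notin S$, in which case the parallel edge $e_2$ also survives in $G-S$, so in $(G-S)-e_1$ the vertices $v$ and $w$ are still joined by $e_2$; thus $e_1$ is not a bridge of $G-S$, and $(G-e_1)-S$ remains connected.

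Next, I would invoke Menger's theorem applied to $v$ and $w$ in the $k$-connected graph $G-e_1$ to obtain $k$ independent $v$-$w$ paths $P_1,\dots,P_k$ in $G-e_1$. Since $e_1\notin E(G-e_1)$, none of these paths uses $e_1$. Finally, take $P_{k+1}$ to be the single-edge path consisting of $e_1$. Its only vertices are $v$ and $w$, both of which are endpoints of every $P_i$, so the independence condition (any vertex lying in more than one path must be an endpoint of all of them) is automatically satisfied. Hence $P_1,\dots,P_{k+1}$ is a collection of $k+1$ independent $v$-$w$ paths in $G$.

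I expect the only real subtlety to be the verification that $G-e_1$ is still $k$-connected; once that is established the rest is immediate. It is worth remarking why this step genuinely uses the parallel-edge hypothesis: without the spare edge $e_2$, the removed edge $e_1$ could well be a bridge in some $G-S$, and vertex connectivity would drop to $k-1$. The argument also generalizes verbatim to any multiplicity $m\ge 2$, which will later allow us to restrict attention to simple graphs in Definition~\ref{def:vertexConnected}.
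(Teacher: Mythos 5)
Your proof is correct, but it takes a genuinely different route from the paper's. You argue directly and constructively: delete one of the parallel edges $e_1$, observe that its partner $e_2$ prevents $e_1$ from being a bridge in any $G-S$ with $|S|\le k-1$ (so $G-e_1$ remains $k$-connected), extract $k$ independent $v$-$w$ paths from $G-e_1$ via the global version of Menger's theorem, and append $e_1$ as a $(k+1)$-st path whose only vertices are the common endpoints. The paper instead argues by contradiction: it deletes \emph{all} edges joining $v$ and $w$, so that at most $k-2$ independent paths remain, invokes only the local (nonadjacent-vertex) version of Menger's theorem to obtain a $v$-$w$ separator $S$ with $|S|\le k-2$, and then uses $|V(G)|\ge k+1$ to promote $S\cup\{v\}$ or $S\cup\{w\}$ to a separator of at most $k-1$ vertices in $G$, contradicting $k$-connectivity. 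What your version buys is a constructive and arguably cleaner argument, plus the explicit observation that deleting one of two parallel edges preserves $k$-connectivity; what the paper's version buys is self-containedness, since it never needs the global Menger for adjacent vertices. That last point is the one place where you should be slightly careful: in $G-e_1$ the vertices $v$ and $w$ are still adjacent (and still joined by parallel edges if the original multiplicity exceeds two), and the textbook proof of the global version handles a single joining edge at a time; extending it to full multiplicity is essentially the computation the paper carries out by hand. Since the multigraph version of the global Menger theorem is nonetheless true and standard (and the paper itself cites it in this generality in the proof of Lemma~\ref{lem:kEdgeConnectedIsVertexConnected}), this is a stylistic rather than a substantive gap.
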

\begin{proof}
We note first that a $k$-connected graph contains at least $k+1$ vertices by definition. Let us suppose, for the sake of contradiction, that $v$ and $w$ are connected by at most $k$ independent paths in $G$. Denote by $G^\prime$ the graph where all edges joining $v$ and $w$ are removed from $G$. Then the vertices $v$ and $w$ are connected by at most $k-2$ independent paths in $G^\prime$. By Menger's Theorem, there is a set $S\subset V(G)\setminus\{v,w\}$ with at most $k-2$ vertices such that $v$ and $w$ are in different components of $G^\prime-S$. Since $G$ has at least $k+1$ vertices, either $G^\prime-S$ has at least three components or one of the components contains at least two vertices. Thus, $S\cup\{v\}$ or $S\cup\{w\}$ is a separator in $G$ containing at most $k-1$ vertices. This is a contradiction to $G$ being $k$-connected.
\end{proof}
As a direct consequence of Lemma~\ref{lem:parallelEdges}, we obtain the following statement about \vertexConnected graphs.
\begin{lemma}\label{lem:vertexRegularSimple}
A \kvertexConnected{k} graph does not have parallel edges.
\end{lemma}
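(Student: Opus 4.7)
The plan is to derive the lemma as a direct consequence of Lemma~\ref{lem:parallelEdges}. To do this I first need to observe that any \kvertexConnected{k} graph $G$ is in fact $k$-connected in the usual sense. This follows quickly from Menger's Theorem: by Definition~\ref{def:vertexConnected} any two vertices of $G$ are joined by $k$ independent paths, which means no pair can be separated by fewer than $k$ vertices; combined with $|V(G)|\ge k+1$, this is exactly the definition of $k$-connectivity used earlier in the paper.

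Having established this, I would argue by contradiction. Suppose there exist two vertices $v,w\in V(G)$ joined by parallel edges. Since $G$ is $k$-connected, Lemma~\ref{lem:parallelEdges} immediately provides at least $k+1$ independent $v$-$w$ paths. This contradicts the defining property of a \kvertexConnected{k} graph, which requires that $k$ be the \emph{maximum} number of independent paths between any two vertices. Hence no parallel edges can occur.

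I do not expect any real obstacle here; the entire work has already been done in Lemma~\ref{lem:parallelEdges}, and the present statement is essentially a reformulation that combines the definition of uniform $k$-connectedness (upper bound $k$ on independent paths) with that lemma (a lower bound of $k+1$ whenever parallel edges are present). The proof should therefore amount to little more than one or two sentences linking these two facts.
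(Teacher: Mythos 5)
Your proposal is correct and matches the paper's intent exactly: the paper presents this lemma as a direct consequence of Lemma~\ref{lem:parallelEdges}, relying on the observation (also made in the paper) that a \kvertexConnected{k} graph is $k$-connected, so parallel edges would force $k+1$ independent paths and contradict Definition~\ref{def:vertexConnected}. No gaps.
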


\begin{figure}
\centering
\begin{tikzpicture}[scale=2]
    \node[draw=black, fill=black, inner sep=0.55mm, circle] (1) at (0,1) { };
    \node[draw=black, fill=black, inner sep=0.55mm, circle] (2) at (0,0) { };
    \node[draw=black, fill=black, inner sep=0.55mm, circle] (3) at (1,0) { };
    \node[draw=black, fill=black, inner sep=0.55mm, circle] (4) at (1,1) { };
    \draw[black, thick] (1) to (2);
    \draw[black, thick] (1) to (3);
    \draw[black, thick] (1) to (4);
\end{tikzpicture}\hspace{5ex}
\begin{tikzpicture}[scale=2]
    \node[draw=black, fill=black, inner sep=0.55mm, circle] (1) at (0,1) { };
    \node[draw=black, fill=black, inner sep=0.55mm, circle] (2) at (0,0) { };
    \node[draw=black, fill=black, inner sep=0.55mm, circle] (3) at (1,0) { };
    \node[draw=black, fill=black, inner sep=0.55mm, circle] (4) at (1,1) { };
    \draw[black, thick] (1) to (2) to (3) to (4) to (1);
\end{tikzpicture}\hspace{5ex}
\begin{tikzpicture}[scale=2]
    \node[draw=black, fill=black, inner sep=0.55mm, circle] (1) at (0,1) { };
    \node[draw=black, fill=black, inner sep=0.55mm, circle] (2) at (0,0) { };
    \node[draw=black, fill=black, inner sep=0.55mm, circle] (3) at (1,0) { };
    \node[draw=black, fill=black, inner sep=0.55mm, circle] (4) at (1,1) { };
    \draw[black, thick] (1) to (2) to (3) to (4) to (1);
    \draw[black, thick] (1) to (3);
    \draw[black, thick] (2) to (4);
\end{tikzpicture}\hspace{5ex}
\begin{tikzpicture}[scale=2]
    \node[draw=black, fill=black, inner sep=0.55mm, circle] (1) at (0,1) { };
    \node[draw=black, fill=black, inner sep=0.55mm, circle] (2) at (0,0) { };
    \node[draw=black, fill=black, inner sep=0.55mm, circle] (3) at (1,0) { };
    \node[draw=black, fill=black, inner sep=0.55mm, circle] (4) at (1,1) { };
    \node[draw=black, fill=black, inner sep=0.55mm, circle] (5) at (0.5,0.5) { };
    \draw[black, thick] (2) to (3);
    \draw[black, thick] (4) to (1);
    \draw[black, thick] (1) to (5);
    \draw[black, thick] (2) to (5);
    \draw[black, thick] (3) to (5);
    \draw[black, thick] (4) to (5);
\end{tikzpicture}
\caption{Small \edgeConnected graphs.}
\label{fig:smallexamples}
\end{figure}
For a first impression, Figure~\ref{fig:smallexamples} shows a few small examples of \edgeConnected graphs. We observe that a graph~$G$ with at least two vertices is \kvertexConnected{1} if and only if~$G$ is \kedgeConnected{1} if and only if~$G$ is a tree. This immediately follows from the respective definitions. One may also consider \kvertexConnected{0} graphs, which are just graphs without edges. We note that there are \kedgeConnected{2} graphs that are not \vertexConnected. The smallest such graph without parallel edges is the hourglass graph on the right in Figure~\ref{fig:smallexamples}. The other three graphs in the figure are both \edgeConnected and \vertexConnected. The question for a \vertexConnected graph that is not \edgeConnected remains. It is addressed in Section~\ref{sec:inclusion}. We also observe that \kvertexConnected{k} graphs are $k$-connected and \kedgeConnected{k} graphs are $k$-edge-connected. Another simple, but useful observation relates both of our classes.
\begin{lemma}\label{lem:kEdgeConnectedIsVertexConnected}
Each $k$-connected graph that is \kedgeConnected{k} is  \kvertexConnected{k}.
\end{lemma}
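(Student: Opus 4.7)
The plan is to combine Menger's theorem with the straightforward observation that independent paths are automatically edge-disjoint.

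First I would fix a $k$-connected graph $G$ that is \kedgeConnected{k} and note that, by definition of $k$-connectedness, $|V(G)|\ge k+1$, so the vertex count requirement in Definition~\ref{def:vertexConnected} is satisfied. Next, for any two vertices $v,w\in V(G)$, $k$-connectedness together with Menger's theorem gives at least $k$ independent $v$-$w$ paths, so the maximum number of independent $v$-$w$ paths is at least $k$.

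It remains to show that this maximum is exactly $k$. Suppose for contradiction that some pair $v,w\in V(G)$ admits $k+1$ independent $v$-$w$ paths. Since any family of independent paths between two vertices is in particular edge-disjoint (two paths sharing an edge would share both its endvertices, contradicting independence in the interior), we would obtain $k+1$ edge-disjoint $v$-$w$ paths. This contradicts the assumption that $G$ is \kedgeConnected{k}, because the maximum number of edge-disjoint paths between $v$ and $w$ would then exceed $k$.

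There is no real obstacle here: the lemma is essentially an immediate reformulation of the fact that ``independent'' is a strictly stronger condition than ``edge-disjoint,'' packaged with Menger's theorem to supply the lower bound. The only subtlety worth stating explicitly in the writeup is why independent paths in a loopless graph are edge-disjoint, which follows directly from the definition of independence recalled in the introduction.
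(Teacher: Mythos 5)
Your proof is correct and follows essentially the same route as the paper: Menger's theorem supplies at least $k$ independent paths, and the observation that independent paths are edge-disjoint caps the count at $k$. The only (negligible) imprecision is in your parenthetical justification---two distinct $v$-$w$ paths sharing an edge need not share an \emph{interior} vertex (the shared edge could be one joining $v$ and $w$ itself), but in that degenerate case the two paths would coincide, so the conclusion stands.
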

\begin{proof}
By Menger's Theorem, there are at least $k$ independent paths between any two vertices in a $k$-connected graph. Furthermore, we are given a \kedgeConnected{k} graph. So there are not more than $k$ independent paths between two vertices, as such paths are edge-disjoint as well. This proves the statement.
\end{proof}
\newpage
Moreover, Menger's Theorem~\cite{menger1927allgemeinen} provides us with the following alternative perspectives on our classes. A graph~$G$ is \kedgeConnected{k} if a minimum $v$-$w$ cut has cardinality~$k$ for any two vertices in $V(G)$. Likewise, a graph $G$ is \kvertexConnected{k} if it is a graph on at least $k+1$ vertices without parallel edges and a minimum $v$-$w$ separator has cardinality~$k\in\mathbb{N}$ for any two nonadjacent vertices in $V(G)$ and cardinality~$k-1$ for any two adjacent vertices in $V(G)$. The underlying edge and vertex versions of Mengers' Theorem are treated by Diestel in~\cite[Section 3]{diestel2017graph}.

Beineke, Oellermann, and Pippert~\cite{beineke2002average} show that \kvertexConnected{k} graphs are minimally $k$-connected for $k\ge 1$ and critically $k$-connected for $k\ge 2$. Their argumentation can also be adapted to the edge case. \kEdgeConnected{k} graphs are minimally $k$-edge-connected for $k\ge 1$ and critically $k$-edge-connected for $k\ge 2$. The statements about critically connected graphs are formulated only for $k\ge2$ as trees are both \kedgeConnected{1} as well as \kvertexConnected{1}, but neither critically 1-connected nor critically 1-edge-connected, because trees clearly retain their connectivity when deleting leaves. Furthermore, \vertexConnected as well as \edgeConnected graphs can be recognized in polynomial time as the maximum number of independent and edge-disjoint paths can be calculated by the flow-based methods of Even and Tarjan~\cite{even1975network}. More recently, Preißner and Schmidt~\cite{preisser2020computing} proposed even linear time algorithms for such tasks, which can however not be applied to each pair of vertices of a graph.

Our notion of uniform connectivity may also be regarded as a regularity concept as it is concerned with a relation that equally involves all pairs of nodes. The examples from Figure~\ref{fig:smallexamples} show that neither \vertexConnected nor \edgeConnected graphs have to be regular by degree. However, Menger's Theorem implies that if we are given a $k$-regular graph~$G$, then $G$ is \kvertexConnected{k} if and only if~$G$ is simple and~$k$-connected and $G$ is \kedgeConnected{k} if and only if~$G$ is~$k$-edge-connected. This tells us, for instance, that the edge graphs of all simple $k$-dimensional polytopes are \kvertexConnected{k}, as they are $k$-connected by Balinski's Theorem~\cite{balinski1961graph}. We also observe a connection to the notion of distance regular graphs, which are comprehensively examined by Brouwer, Cohen, and Neumaier~\cite{brouwer2012distance}. Recall that distance regular graphs are regular by definition and Brouwer and Koolen~\cite{brouwer2009vertex} show that distance regular graphs having vertices of degree $k$ are $k$-connected and thus also $k$-edge-connected. So distance regular graphs are both \vertexConnected and \edgeConnected. On the other hand, since neither \vertexConnected nor \edgeConnected graphs have to be regular, they also do not have to be distance regular. Further notions that by name might be related to our classes are those of path-regular graphs, introduced by Matula and Dolev~\cite{matula1980path}, or those of $k$-uniformly connected graphs, surveyed by~Chartrand and Zhang~\cite{chartrand2020uniformly}. It is, however, not too difficult to see that the respective classes do not include each other.

To the end of this section, we summarize results about the structure of \kvertexConnected{2} and \kedgeConnected{2} graphs. Beineke, Oellermann, and Pippert~\cite{beineke2002average} observe that a graph is \kvertexConnected{k} if and only if it is $k$-connected and contains no subgraph homeomorphic to the tripartite graph $K_{1,1,2}$. This means that a graph is \kvertexConnected{2} if and only if it is $2$-connected and none of its cycles contains a chord. In other words, a graph is \kvertexConnected{2} if and only if it is a cycle. Kingsford and Mar{\c{c}}ais~\cite{kingsford2009vertices} show that a graph is \kedgeConnected{2} if and only if it is connected and each of its blocks is a cycle. More generally, they show for any $k\in\mathbb{N}$ that a graph which is obtained by gluing two \kedgeConnected{k} graphs at a single vertex remains \kedgeConnected{k}. For $k\geq3$, however, this gluing operation is not enough to generate all \kedgeConnected{k} graphs.

\section{Inclusions between \vertexConnected and \edgeConnected graphs}\label{sec:inclusion}

This section focuses on proving the remarkable fact that any \kvertexConnected{k} graph is also \kedgeConnected{k} for $k\le 3$. For $k=1$ both classes comprise by definition exactly all trees. The case $k=2$ follows directly from the characterizations from the end of Section~\ref{sec:basic}. For $k=3$ this statement is not as obvious and the inclusion to be shown does not hold for $k=4$ as is demonstrated by the example from Figure~\ref{fig:vertexRegularNotEdgeRegular}. Whereas the depicted graph contains vertices of degree $4$, the non-solid lines there indicate five edge-disjoint paths between $v$ and~$w$. Thus, the graph in Figure~\ref{fig:vertexRegularNotEdgeRegular} is not \kedgeConnected{4}. However, note that $v$ and~$w$ are the only vertices of degree larger than $4$ and we easily find a separator with three vertices when deleting the edge~$vw$. Consequently, the number of independent paths between $v$ and~$w$ is four. Furthermore, we can observe that our graph is $4$-connected and thereby is indeed \kvertexConnected{4}. It is also not too difficult to verify algorithmically that our example is the smallest of its kind by enumerating all graphs on at most seven vertices completely. Our proof of the central statement of this section uses the following fan version of Menger's Theorem, which can be found in Diestel~\cite[Section 3.3]{diestel2017graph}.

\begin{figure}
\centering
\begin{tikzpicture}[scale=2]
    \node[draw=black, fill=black, inner sep=0.55mm, circle, label={[label distance=-0.5ex]180:$v$}] (1) at (-0.5,-0.688) { };
    \node[draw=black, fill=black, inner sep=0.55mm, circle, label={[label distance=-0.5ex]0:$w$}] (2) at (0.5,-0.688) { };
    \node[draw=black, fill=black, inner sep=0.55mm, circle] (3) at (0.809,0.263) { };
    \node[draw=black, fill=black, inner sep=0.55mm, circle] (4) at (0,0.851) { };
    \node[draw=black, fill=black, inner sep=0.55mm, circle] (5) at (-0.809,0.263) { };
    \node[draw=black, fill=black, inner sep=0.55mm, circle] (6) at (0,0.338) { };
    \node[draw=black, fill=black, inner sep=0.55mm, circle] (7) at (0,-0.175) { };

	\draw[black, thick] (3) -- (4) -- (5) -- (6) -- (3);
	
	\draw[chroma5grass, loosely dotted, thick] (1) -- (2);
	\draw[chroma5sand, thick, dotted] (4) -- (1);
	\draw[chroma5sand, thick, dotted] (4) -- (2);
	\draw[chroma5blue, thick, dashdotted] (1) -- (6);
	\draw[chroma5blue, thick, dashdotted] (2) -- (6);
	\draw[chroma5gray, thick, densely dashed] (1) -- (7);
	\draw[chroma5gray, thick, densely dashed] (2) -- (7);
	\draw[chroma5green, thick, densely dotted] (1) -- (5);
	\draw[chroma5green, thick, densely dotted] (2) -- (3);
	\draw[chroma5green, thick, densely dotted] (7) -- (5);
	\draw[chroma5green, thick, densely dotted] (7) -- (3);
	\end{tikzpicture}
    \caption{A \vertexConnected graph that is not \edgeConnected.}
    \label{fig:vertexRegularNotEdgeRegular}
\end{figure}

\begin{theorem}\label{thm:fanMenger}
Given a graph $G$, a vertex set $X\subset V(G)$, and a vertex $v\in V(G)\setminus X$. Then the minimum number of vertices separating $v$ from $X$ in $G$ is equal to the maximum number of
paths forming a $v$-$X$ fan in $G$.
\end{theorem}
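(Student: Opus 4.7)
The plan is to reduce the fan statement to the vertex form of Menger's Theorem via a classical auxiliary construction. First, I would build an auxiliary graph $G'$ from $G$ by adjoining a single new vertex $x^*$ together with the edges $x^*y$ for every $y\in X$. Since $v\notin X$, the vertices $v$ and $x^*$ are distinct and nonadjacent in $G'$, so the standard vertex version of Menger's Theorem applies: the minimum number of vertices in $V(G')\setminus\{v,x^*\}$ separating $v$ from $x^*$ in $G'$ equals the maximum number of independent $v$-$x^*$ paths in $G'$.

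Next, I would translate these two quantities back to the corresponding quantities in $G$. For separators, given any set $S\subseteq V(G)\setminus\{v\}$ that separates $v$ from $X$ in $G$, every $v$-$x^*$ path in $G'$ must use some edge $yx^*$ with $y\in X$, so the prefix ending at $y$ is a $v$-$X$ path in $G$ and hence meets $S$; conversely, any $v$-$x^*$ separator in $G'$ avoiding $\{v,x^*\}$ already lies in $V(G)\setminus\{v\}$ and, prolonging a $v$-$X$ path in $G$ by the edge to $x^*$, must meet every such $v$-$X$ path. Thus the two separator minima coincide. For the path side, a $v$-$X$ fan $P_1,\dots,P_m$ in $G$ has pairwise distinct endpoints in $X$ (otherwise two paths would share a non-$v$ vertex), and appending the edges from these endpoints to $x^*$ yields $m$ internally disjoint $v$-$x^*$ paths in $G'$. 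Conversely, any collection of internally disjoint $v$-$x^*$ paths in $G'$ uses pairwise distinct penultimate vertices in $X$, and deleting $x^*$ from each path gives a $v$-$X$ fan of the same cardinality.

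Combining the two translations with vertex Menger applied in $G'$ gives the claimed equality. The only point that requires a bit of care, and which I would highlight in the write-up, is the asymmetry in the definition of \emph{separating} used in this paper (where the separator may intersect $X$): this is precisely accommodated by the construction, because a separator $S$ with $S\cap X\neq\emptyset$ still works in $G'$ since any $v$-$x^*$ path through a vertex of $X\cap S$ already meets $S$. There is no real obstacle beyond this bookkeeping; the reduction is clean once the auxiliary graph is in place.
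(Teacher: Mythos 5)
The paper does not prove this statement at all: Theorem~\ref{thm:fanMenger} is quoted as a known result and attributed to Diestel (Section~3.3), so there is no in-paper argument to compare yours against. Your proof is the standard and correct reduction: adjoin an apex vertex $x^*$ joined to every vertex of $X$, apply the vertex form of Menger's Theorem to the nonadjacent pair $v,x^*$, and check that $v$-$x^*$ separators avoiding $\{v,x^*\}$ correspond exactly to $v$-$X$ separators and that independent $v$-$x^*$ paths correspond exactly to $v$-$X$ fans. You also correctly flag the one point of potential confusion, namely that a separator of $v$ from $X$ is allowed to contain vertices of $X$, and your construction handles it. The only wrinkle worth tightening in a write-up: when you translate paths back from $G'$ to $G$, the truncation of a $v$-$x^*$ path at its penultimate vertex may still pass through other vertices of $X$ internally, so it need not be a $v$-$X$ path in the strict sense; you should truncate each path at the \emph{first} vertex of $X$ it meets (and likewise take the initial $v$-$X$ segment in the separator direction). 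This does not change any of the cardinalities, so it is pure bookkeeping and not a gap.
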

\newpage
Before our central statement, we prove a lemma that makes part of the subsequent arguments more concise. The lemma is a consequence of Theorem~\ref{thm:fanMenger} and ensures the existence of certain fans between vertices of a $k$-separator in a $k$-connected~graph.
\begin{lemma}\label{lem:fanInKseparator}
For $k\in\mathbb{N}$ with $k>0$, let $G$ be a $k$-connected graph, and consider two distinct vertices $v,w\in V(G)$. Choose $G_v\subseteq G-w$ and $G_w\subseteq G-v$ such that $G_v\cup G_w = G$ and $S=V(G_v)\cap V(G_w)$ satisfies $|S|=k$.

Then, for each vertex $x\in S$, there exists an $x$-$(S\setminus\{x\})$ fan in $G_w$ consisting of $\min\{|N_{G_w}(x)|, k-1\}$ paths.
\end{lemma}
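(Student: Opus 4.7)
The plan is to apply the fan version of Menger's theorem (Theorem~\ref{thm:fanMenger}) inside $G_w$, with source vertex $x$ and target set $X := S \setminus \{x\}$. By that theorem it suffices to show that every $T \subseteq V(G_w)\setminus\{x\}$ which separates $x$ from $S\setminus\{x\}$ in $G_w$ has cardinality at least $m := \min\{|N_{G_w}(x)|,\,k-1\}$. The matching upper bound on the fan size is automatic, since any such fan leaves $x$ along distinct edges and terminates at distinct vertices of $S\setminus\{x\}$, so it uses at most $|N_{G_w}(x)|$ and at most $k-1$ paths.

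For the lower bound I would argue by contradiction. Suppose $|T| < m$, so that both $|T| < |N_{G_w}(x)|$ and $|T| \leq k-2$ hold. Then some neighbor $y \in N_{G_w}(x)\setminus T$ exists, and I would note that if $y$ were in $S$ the single edge $xy$ would already be an $x$-$(S\setminus\{x\})$ path avoiding $T$; hence $y \in V(G_w)\setminus S$. The decisive structural input I would use is that, since $G_v \cup G_w = G$ and $V(G_v)\cap V(G_w) = S$, no edge of $G$ joins $V(G_w)\setminus S$ to $V(G_v)\setminus S$, so every edge of $G$ incident to $V(G_w)\setminus S$ already lies in $G_w$.

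Next I would let $C_y$ be the component of $y$ in $G_w - T - x$. Because $T$ separates $x$ from $S\setminus\{x\}$ in $G_w$, the component of $x$ in $G_w - T$ meets $S$ only at $x$, and consequently $C_y \subseteq V(G_w)\setminus S$. Combined with the structural observation above, this forces every $G$-neighbor of a vertex of $C_y$ to lie in $C_y \cup T \cup \{x\}$. Since $v \in V(G_v)\setminus S$ is disjoint from $V(G_w)$, the set $T \cup \{x\}$ is then a $y$-$v$ separator in $G$ of size $|T|+1 \leq k-1$, contradicting the $k$-connectivity of $G$.

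The main difficulty is recognizing that a putative small $x$-$(S\setminus\{x\})$ separator $T$ in $G_w$ can be upgraded, merely by appending $x$, into a too-small separator of all of $G$; this step depends crucially on the edge-disjointness property $E_G(V(G_w)\setminus S,\,V(G_v)\setminus S) = \emptyset$ to guarantee that $C_y$ is genuinely cut off from the $v$-side of $G$, after which the $k$-connectivity of $G$ closes the argument.
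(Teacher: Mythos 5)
Your proof is correct, and it shares the paper's outer skeleton (reduce via Theorem~\ref{thm:fanMenger} to lower-bounding every $x$-$(S\setminus\{x\})$ separator $T$ in $G_w$, then pick a neighbour $y\in N_{G_w}(x)\setminus T$ when $|T|<|N_{G_w}(x)|$), but the core step is genuinely different. The paper stays on the ``existence of paths'' side of Menger: it applies Theorem~\ref{thm:fanMenger} a second time to produce a $y$-$S$ fan of $k$ paths in $G$, observes that this fan lies entirely in $G_w$ and ends at all $k$ vertices of $S$, and concludes that $T$ must meet the $k-1$ internally disjoint paths into $S\setminus\{x\}$, so $|T|\ge k-1$. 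You instead work on the ``no small cut'' side: you confine the component $C_y$ of $y$ in $G_w-T-x$ to $V(G_w)\setminus S$ and upgrade $T$ to the vertex set $T\cup\{x\}$, which separates $y$ from $v$ in all of $G$, contradicting $k$-connectivity when $|T|\le k-2$. The two arguments are dual uses of the same connectivity hypothesis; yours is slightly more elementary in that it needs Menger only once, and it has the virtue of making explicit the structural fact that no edge of $G$ joins $V(G_w)\setminus S$ to $V(G_v)\setminus S$ --- a fact the paper's proof uses only implicitly when it asserts that the fan $F$ ``is indeed contained in $G_w$.'' Your side remark that the fan size cannot exceed $\min\{|N_{G_w}(x)|,k-1\}$ also addresses the exactness of the count, which the paper leaves tacit.
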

\begin{proof}
Let $T\subseteq V(G_w)\setminus\{x\}$ be a set that separates $x$ and $S\setminus\{x\}$ in $G_w$.
If $T$ contains less vertices than $N_{G_w}(x)$, then $T$ also separates a vertex $y\in N_{G_w}(x)\setminus T$ from $S\setminus\{x\}$. Since $G$ is $k$-connected, by Theorem~\ref{thm:fanMenger}, there exists a $y$-$S$ fan $F$ consisting of $k$ paths in $G$. Each of the paths of the fan $F$ ends in a distinct vertex of $S$ and $F$ is indeed contained in $G_w$. Recalling that $T$ separates $y$ from $S\setminus\{x\}$, we obtain that $|T|\geq k-1$.

We showed that any set $T\subseteq V(G_w)\setminus\{x\}$ that separates $x$ and $S\setminus\{x\}$ in $G_w$ consists of $\min\{|N_{G_w}(x)|, k-1\}$ vertices. By Theorem~\ref{thm:fanMenger}, this implies the statement to be shown.
\end{proof}
\begin{theorem}\label{thm:3vertexRegularIs3edgeRegular}
Let $k\in\{0, 1, 2, 3\}$ and let $G$ be a $k$-connected graph in which two vertices are connected by $k+1$ edge-disjoint paths. Then there are two vertices in $G$ that are connected by $k+1$ independent paths.
\end{theorem}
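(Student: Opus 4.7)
For $k\in\{0,1\}$ the statement is immediate: $k=0$ is vacuous, and for $k=1$ the union of two edge-disjoint $v$-$w$ paths is a connected subgraph with all even degrees and hence contains a cycle, whose adjacent vertices are joined by two independent paths. For $k=2$ I would argue by contradiction using the characterizations at the end of Section~\ref{sec:basic}: if no pair in the $2$-connected graph~$G$ were joined by three independent paths, then~$G$ would be \kvertexConnected{2} and hence a cycle, contradicting the existence of three edge-disjoint $v$-$w$ paths.

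For $k=3$ the same contradiction strategy is the starting point, but closing the argument is substantially harder. Suppose $G$ is $3$-connected, that $v,w\in V(G)$ are joined by four edge-disjoint paths, and that no pair of vertices in~$G$ is joined by four independent paths. Then $v,w$ have at most three independent paths, so Menger's theorem supplies a $3$-separator $S=\{s_1,s_2,s_3\}$ for $v,w$; decompose $G=G_v\cup G_w$ with $V(G_v)\cap V(G_w)=S$, $v\in V(G_v)$, $w\in V(G_w)$, and set $A=V(G_v)\setminus S$, $B=V(G_w)\setminus S$. The case where $v,w$ are adjacent reduces to this by deleting the edge~$vw$ and analysing the three edge-disjoint paths that remain. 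Because the four edge-disjoint paths cross~$S$ in both $G_v$ and~$G_w$, one has $|E_{G_v}(A,S)|\geq 4$ and $|E_{G_w}(S,B)|\geq 4$; combined with the fact that $3$-connectedness forces every vertex of~$S$ to have a neighbor in both~$A$ and~$B$, pigeonhole yields some $s^\ast\in S$ with at least two neighbors in~$A$, and after a parallel argument some (possibly different) vertex of~$S$ with at least two neighbors in~$B$.

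From there the plan is to apply Theorem~\ref{thm:fanMenger} to produce a size-three fan from~$v$ to~$S$ inside~$G_v$ and a size-three fan from~$w$ to~$S$ inside~$G_w$, and Lemma~\ref{lem:fanInKseparator} to produce a size-two $s^\ast$-$(S\setminus s^\ast)$ fan on each side of~$S$ where one exists. Splicing these pieces would yield four candidate paths between $s^\ast$ and~$w$: two stay inside~$G_w$, concatenating the two paths of the $s^\ast$-fan in~$G_w$ with the two fan-paths from the other separator vertices to~$w$; the remaining two leave $s^\ast$ via its $G_v$-fan, reach the other two vertices of~$S$, and return to~$w$ along the remaining fan-paths in~$G_w$. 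Exhibiting such a quadruple between $s^\ast$ and~$w$ would immediately contradict the standing assumption.

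The main obstacle is to guarantee internal disjointness of the four spliced paths, which is strictly stronger than the edge-disjointness that falls out of the constructions. I expect this to require an uncrossing procedure whenever two fans share an interior vertex, together with a short case distinction depending on how the final exits of the four edge-disjoint paths distribute over~$S$ into~$B$ (a $2$-$1$-$1$, $2$-$2$-$0$, $3$-$1$-$0$, or $4$-$0$-$0$ split); should no single vertex $s^\ast\in S$ admit fans of size two on both sides, one must first relocate~$S$ or swap the roles of~$v$ and~$w$ before the splicing argument goes through.
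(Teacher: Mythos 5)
Your arguments for $k\le 2$ are fine and close in spirit to the paper's (the paper instead runs a single induction on $k$, using Lemma~\ref{lem:parallelEdges} to dispose of parallel edges and deletion of the edge $vw$ to handle adjacent pairs). The problem is the $k=3$ case: what you present there is a plan whose central difficulty you name yourself but do not resolve, and that difficulty is exactly where the one missing idea sits. Splicing a $v$-$S$ fan in $G_v$, an $s^\ast$-$(S\setminus\{s^\ast\})$ fan on each side, and a $w$-$S$ fan in $G_w$ cannot be expected to produce internally disjoint paths, because the two fans living inside $G_v$ (respectively $G_w$) may share interior vertices in essentially arbitrary ways; no routine uncrossing fixes this, and your fallback of ``relocating $S$'' is precisely the unproved step.

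The paper's proof avoids the splicing problem entirely by two moves you are missing. First, $S$ is not an arbitrary $3$-separator but one chosen \emph{closest to $v$}, meaning the only $v$-$S$ separator with at most three vertices is $S$ itself. Second, instead of building one fan from $v$ to $S$ and a second fan out of $x:=s^\ast$ inside $G_v$, one enlarges the target set: since two of the four edge-disjoint paths enter $x$ from the $G_v$ side, $x$ has a neighbour $x^\prime\in V(G_v)\setminus(S\cup\{v\})$, and the extremal choice of $S$ guarantees that $X:=S\cup\{x^\prime\}$ cannot be separated from $v$ by fewer than four vertices. Theorem~\ref{thm:fanMenger} then yields a \emph{single} $v$-$X$ fan with four paths in $G_v$, which is internally disjoint by definition; two of its legs reach $x$ (one directly, one through $x^\prime$ and the edge $x^\prime x$) and the other two reach the remaining vertices of $S$. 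Now only the $x$-$(S\setminus\{x\})$ fan of size $k-1=2$ from Lemma~\ref{lem:fanInKseparator} is needed, and it lives entirely in $G_w$, so it meets the $v$-$X$ fan only in $S$: the four independent paths appear between $v$ and $x$, not between $s^\ast$ and $w$, and no fan ever has to be built from $w$ at all. Without the extremal choice of $S$ and the enlarged target $X$, your size-three $v$-$S$ fan is one path short and the uncrossing obstacle is genuine, so the proposal as it stands does not prove the theorem.
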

\begin{proof}

We prove the claim by induction on $k$. If $k=0$ the statement of the lemma is trivially fulfilled. So let $k\in\{1,2,3\}$ and assume the statement holds true for $k-1$. Recall that Lemma~\ref{lem:parallelEdges} provides us with $k+1$ independent paths if the graph $G$ contains parallel edges. Thus we assume that $G$ is simple. Consider two vertices $v,w\in V(G)$ that are connected by $k+1$ edge-disjoint paths in $G$. 

If $v$ and $w$ are adjacent, the graph $G-vw$ is $(k-1)$-connected and contains $k$ edge-disjoint $v$-$w$ paths. By induction, in $G-vw$, there are $k$ independent $v$-$w$ paths. Together with $vw$ this yields $k+1$ independent $v$-$w$ paths in $G$.

Assume now that $v$ and $w$ are not adjacent. If $v$ and $w$ are connected by $k+1$ independent paths, there is nothing left to prove. So, by Menger's Theorem, we may assume that there is a $v$-$w$ separator containing $k$ vertices. We consider such a $v$-$w$ separator that is closest to $v$. More precisely, we choose $S$ with $|S|=k$ such that the only $v$-$S$ separator with at most $k$ vertices is $S$ itself. Furthermore, let $C$ be the component of $G-S$ that contains $v$ and define the subgraphs
\begin{equation*}
    G_v:=\big(V(C)\cup S,E(C)\cup E(C,S)\big)\quad\text{and}\quad G_w:=G-V(C),
\end{equation*}
which are shown in Figure~\ref{fig:3vertexRegularIs3edgeRegular} for the case $k=3$. Regard the $v$-$S$ subpaths of $k+1$ edge-disjoint $v$-$w$ paths. Two of these subpaths have to contain a common vertex of $S$, say $x$. Then $x$ has two neighbors in $G_v$ as well as in $G_w$. One of the neighbors in $G_v$, say $x^\prime$, is not $v$ as we obtain two parallel edges otherwise. It also holds true that $x^\prime\notin S$ as by definition $G_v$ does not contain edges joining vertices in $S$. Thus, $X:=\{x^\prime\}\cup S$ contains indeed $k+1$ elements. By the choice of $S$, the vertex set $X$ can only be separated from $v$ by at least $k+1$ vertices. So Theorem~\ref{thm:fanMenger} says that there is a $v$-$X$ fan with $k+1$ paths in $G_v$. No path of the $v$-$X$-fan contains $x^\prime x$. As a result, there exits a set of $k+1$ independent paths of which two connect $v$ and $x$ and the remaining (if any) connect to distinct vertices in $S\setminus\{x\}$. The situation is illustrated in Figure~\ref{fig:3vertexRegularIs3edgeRegular} for the case $k=3$.

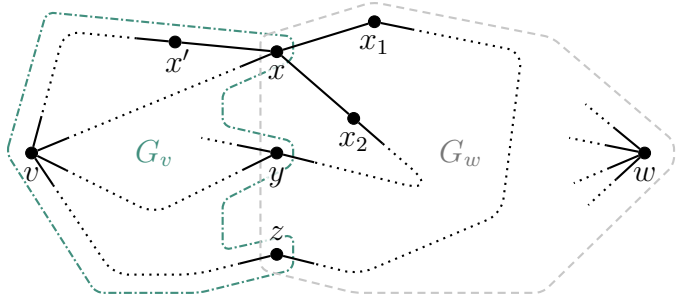
\begin{figure}
\centering
\begin{tikzpicture}[scale=1.075]
        \begin{scope}[shift={(-0.25,0)}]
    \draw[chroma4green, thick, densely dashdotted, scale=1.15, rounded corners=1mm] (-0.05,0) -- (0.375,1.51) -- (3,1.25) -- (3,1) -- (2.25,0.6675) -- (2.25,0.2970) -- (3,0.125) -- (3,-0.125) -- (2.25,-0.51) -- (2.25, -1.0425) -- (3,-0.8750) -- (3,-1.25) -- (2,-1.5) -- (0.9,-1.5) -- cycle;
    \end{scope}
    \begin{scope}[shift={(-0.65,0)}]
    \draw[chroma4gray, thick, densely dashed, scale=1.15, rounded corners=1mm] (3,1.25) -- (4.1942,1.6194) -- (6.1,1.24) -- (7.4,0.125) -- (7.4,-0.125) -- (6,-1.5) -- (4,-1.5) -- (3,-1.25) -- cycle;
    \end{scope}
    
    \draw[draw=white,fill=white, opacity=0.75] (2.88,0.9) rectangle (3.09,1.12);
    \draw[draw=white,fill=white, opacity=0.75] (2.88,-0.4) rectangle (3.08,-0.13);
    \draw[draw=white,fill=white, opacity=0.75] (2.84,-1.12) rectangle (3.09,-0.9);

    \node[draw=black, fill=black, inner sep=0.55mm, circle, label={[label distance=-0.25ex]270:$v$}] (v) at (0,0) { };
    \node[draw=black, fill=black, inner sep=0.55mm, circle, label={[label distance=-0.25ex]270:$w$}] (w) at (7.5,0) { };
    \node[draw=black, fill=black, inner sep=0.55mm, circle, label={[label distance=-0.25ex]270:$x$}] (x) at (3,1.25) { };
    \node[draw=black, fill=black, inner sep=0.55mm, circle, label={[label distance=-0.25ex]270:$y$}] (y) at (3,0) { };
    \node[draw=black, fill=black, inner sep=0.55mm, circle, label={[label distance=-0.25ex]90:$z$}] (z) at (3,-1.25) { };
    \node[draw=black, fill=black, inner sep=0.55mm, circle, label={[label distance=-0.75ex]270:$\,x^\prime$}] (x') at (1.7557,1.3696) { };
    \node[draw=black, fill=black, inner sep=0.55mm, circle, label={[label distance=-0.25ex]270:$x_1$}] (x1) at (4.1942,1.6194) { };
    \node[draw=black, fill=black, inner sep=0.55mm, circle, label={[label distance=-0.25ex]270:$x_2$}] (x2) at (3.9407,0.4269) { };
    
    \draw[black, thick] (x') to (x);
    \draw[black, thick] (x) to (x1);
    \draw[black, thick] (x) to (x2);
    
    \draw[black, thick] (v) to (0.4351,0.1813);
    \draw[black, thick] (x) to (2.5649,1.0687);
    \draw[black, thick, dotted] (0.4351,0.1813) to (2.5649,1.0687);

    \draw[black, thick] (x') to (1.2865,1.4147);
    \draw[black, thick] (v) to (0.1215,0.4555);
    \draw[black, thick, dotted, rounded corners=2mm] (1.2865,1.4147) to (0.4,1.5) to (0.1215,0.4555);
    
    \draw[black, thick] (v) to (0.2615,-0.3922);
    \draw[black, thick] (z) to (2.5427,-1.3643);
    \draw[black, thick, dotted, rounded corners=2mm] (0.2615,-0.3922) to (1,-1.5) to (2,-1.5) to (2.5427,-1.3643);
    
    \draw[black, thick] (v) to (0.4216,-0.2108);
    \draw[black, thick] (y) to (2.5784,-0.2108);
    \draw[black, thick, dotted, rounded corners=2mm] (0.4216,-0.2108) to (1.5,-0.75) to (2.5784,-0.2108);
    \draw[black, thick] (y) to (2.5380,0.0937);
    \draw[black, thick, dotted, rounded corners=2mm] (2.5380,0.0937) to (2.0760,0.1873);
    
    \draw[black, thick] (x2) to (4.2955,0.1165);
    \draw[black, thick] (y) to (3.4573,-0.1143);
    \draw[black, thick, dotted, rounded corners=5mm] (4.2955,0.1165) to (5,-0.5) to (3.4573,-0.1143);
    
    \draw[black, thick] (x1) to (4.6560,1.5249);
    \draw[black, thick] (z) to (3.4573,-1.3643);
    \draw[black, thick, dotted, rounded corners=2mm] (4.6560,1.5249) to (6,1.25) to (5.75,-0.75) to (4,-1.5) to (3.4573,-1.3643);
    
    \draw[black, thick] (w) to (7.1379,0.3018);
    \draw[black, thick, dotted, rounded corners=2mm] (7.1379,0.3018) to (6.7757,0.6036);
    \draw[black, thick] (w) to (7.0380,0.0937);
    \draw[black, thick, dotted, rounded corners=2mm] (7.0380,0.0937) to (6.5760,0.1873);
    \draw[black, thick] (w) to (7.0667,-0.1857);
    \draw[black, thick, dotted, rounded corners=2mm] (7.0667,-0.1857) to (6.6334,-0.3714);
    \draw[black, thick] (w) to (7.1551,-0.3213);
    \draw[black, thick, dotted, rounded corners=2mm] (7.1551,-0.3213) to (6.8102,-0.6426);
    
    \node[chroma4green] at (1.5,0) {$G_v$};
    \node[chroma4gray!60!black] at (5.25,0) {$G_w$};
\end{tikzpicture}
\caption{The structure of the graph in the proof of Theorem~\ref{thm:3vertexRegularIs3edgeRegular}.}
\label{fig:3vertexRegularIs3edgeRegular}
\end{figure}

Since $k\leq 3$ and $|N_{G_w}(x)|\geq 2$, it holds true that $\min\{|N_{G_w}(x)|, k-1\} = k-1$. Thus, by Lemma~\ref{lem:fanInKseparator}, in $G_w$ there exists an $x$-$(S\setminus\{x\})$ fan consisting of $k-1$ paths. Together with the aforementioned $k+1$ independent $x$-$S$ paths this yields $k+1$ independent $v$-$x$ paths in $G$.
\end{proof}

Note that the proof fails for $k>3$, as in this case, we may not assume that $\min\{|N_{G_w}(x)|, k-1\}=k-1$ and therefore the number of independent paths provided by Lemma~\ref{lem:fanInKseparator} is not enough to obtain the desired paths. This is, however, as expected, since Figure~\ref{fig:vertexRegularNotEdgeRegular} already showed us a counterexample for $k=4$.

\begin{corollary}\label{cor:3edge3vertex}
A graph is \kvertexConnected{3} if and only if it is 3-connected and \kedgeConnected{3}.
\end{corollary}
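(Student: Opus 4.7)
The plan is to prove the two directions separately, and both directions follow quickly from results already established in the excerpt.

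For the forward direction, assume $G$ is \kvertexConnected{3}. That $G$ is $3$-connected was already observed in Section~\ref{sec:basic} as an immediate consequence of the definition, so the only thing to verify is that $G$ is \kedgeConnected{3}. Since $G$ is $3$-connected, Menger's Theorem gives at least three edge-disjoint $v$-$w$ paths for any two vertices $v,w\in V(G)$. To rule out four edge-disjoint paths between some pair, I would argue by contradiction: if such a pair existed, then Theorem~\ref{thm:3vertexRegularIs3edgeRegular} applied with $k=3$ would yield two vertices connected by four independent paths in $G$, contradicting the assumption that $G$ is \kvertexConnected{3}. Hence the maximum number of edge-disjoint paths between any two vertices is exactly three.

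For the backward direction, assume $G$ is $3$-connected and \kedgeConnected{3}. This is exactly the hypothesis of Lemma~\ref{lem:kEdgeConnectedIsVertexConnected} with $k=3$, which immediately yields that $G$ is \kvertexConnected{3}.

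Since both directions are near-immediate invocations of statements already available (the hard work is concentrated in Theorem~\ref{thm:3vertexRegularIs3edgeRegular}), there is no real obstacle left. The only small point that needs care is to confirm that uniform $3$-connectivity indeed implies $3$-connectivity in the sense required (i.e.\ at least $k+1$ vertices and no small separator), which is handled by Definition~\ref{def:vertexConnected} together with Menger's Theorem, and has been noted explicitly in Section~\ref{sec:basic}.
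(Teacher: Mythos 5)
Your proof is correct and follows exactly the paper's route: the paper's own proof is the one-liner ``combine Lemma~\ref{lem:kEdgeConnectedIsVertexConnected} and Theorem~\ref{thm:3vertexRegularIs3edgeRegular},'' and you have simply spelled out how each of those two results handles one direction. No gaps.
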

\begin{proof}
We obtain this result by combining Theorems~\ref{lem:kEdgeConnectedIsVertexConnected} and~\ref{thm:3vertexRegularIs3edgeRegular}.
\end{proof}

\section{Constructing uniformly 3-connected graphs}\label{sec:construction}

In this section, we characterize the class of \kvertexConnected{3} graphs constructively. By Corollary~\ref{cor:3edge3vertex} this is exactly the class of \kedgeConnected{3} graphs that are also $3$-connected. As we mentioned in the introduction, Kingsford and Mar\c{c}ais~\cite{kingsford2009synthesis} characterize \kedgeConnected{3} graphs. However, their construction creates separators containing exactly two vertices. Thus, their characterization may not be applied to \kvertexConnected{3} graphs directly.

We regard the following recursively defined class of simple graphs $\mathcal{G}$. We prove this class to exactly contain all \kvertexConnected{3} graphs in Theorem~\ref{thm:classG}. Recall that $\mathcal{G}$ has to contain only simple graphs by Lemma~\ref{lem:vertexRegularSimple}.
\begin{enumerate}
    \item If $G$ is $3$-connected and $3$-regular, then $G\in\mathcal{G}$
    
    \item If $G_1, G_2\in\mathcal{G}$ with vertices $v_1\in V(G_1)$ and $v_2\in V(G_2)$ with $N(v_1)=\{x_1,x_2,x_3\}$ and $N(v_2)=\{y_1,y_2,y_3\}$, then
    \begin{equation*}
        (G_1-v_1)\cup (G_2-v_2) + x_1y_1 + x_2y_2 + x_3y_3\in \mathcal{G}.
    \end{equation*}
    We denote the set of all graphs obtained from $G_1$ and $G_2$ in this way by $G_1\bridge G_2$. An example of operation~$\bridge$ is depicted in Figure~\ref{fig:bridgeOperation}.
    \item If $G\in\mathcal{G}$ with distinct vertices $v, w, x\in V(G)$, $vw\in E(G)$, $\deg(y)=3$ for all $y\in V(G)\setminus\{x\}$, and $u\notin V(G)$, then
    \begin{equation*}
        G+u-vw+uw+uv+ux\in\mathcal{G}.
    \end{equation*}
    We denote the set of all graphs obtained from $G$ in this way by $\spoke(G)$. An example of operation~$\spoke$ is depicted in Figure~\ref{fig:spokeOperation}.
\end{enumerate}

\begin{figure}
\centering
\begin{tikzpicture}[scale=1.25]
    \node[draw=gray, fill=white, inner sep=0.55mm, circle, label={[label distance=-0.75ex]315:$v_1$}] (v1) at (1,0.5) { };
    \node[draw=black, fill=black, inner sep=0.55mm, circle, label={[label distance=-0.5ex]90:$x_1$}] (x1) at (0,2) { };
    \node[draw=black, fill=black, inner sep=0.55mm, circle, label={[label distance=-0.75ex]135:$x_2$}] (x2) at (-0.25,0.75) { };
    \node[draw=black, fill=black, inner sep=0.55mm, circle, label={[label distance=-0.25ex]270:$x_3$}] (x3) at (0,0) { };
    
    \node[draw=gray, fill=white, inner sep=0.55mm, circle, label={[label distance=-0.5ex]180:$v_2$}] (v2) at (2,1.5) { };
    \node[draw=black, fill=black, inner sep=0.55mm, circle, label={[label distance=-0.5ex]90:$y_1$}] (y1) at (3,2) { };
    \node[draw=black, fill=black, inner sep=0.55mm, circle, label={[label distance=-0.75ex]45:$y_2$}] (y2) at (3.5,1.25) { };
    \node[draw=black, fill=black, inner sep=0.55mm, circle, label={[label distance=-0.25ex]270:$y_3$}] (y3) at (3,0) { };
    
    \draw[gray, thick, densely dotted] (v1) to (x1);
    \draw[gray, thick, densely dotted] (v1) to (x2);
    \draw[gray, thick, densely dotted] (v1) to (x3);
    
    \draw[gray, thick, densely dotted] (v2) to (y1);
    \draw[gray, thick, densely dotted] (v2) to (y2);
    \draw[gray, thick, densely dotted] (v2) to (y3);
    
    \draw[chroma4green, thick, dashed] (x1) to (y1);
    \draw[chroma4green, thick, dashed] (x2) to (y2);
    \draw[chroma4green, thick, dashed] (x3) to (y3);
    
    \draw[black, thick] (x1) to (x2);
    
    \draw[black, thick] (y1) to (y2);
    \draw[black, thick] (y2) to (y3);
    
    \draw[black, thick] (y1) to (3.333,2);
    \draw[black, thick, dotted] (3.333,2) to (3.667,2);
    
    \draw[black, thick] (y2) to (3.787,1.084);
    \draw[black, thick, dotted] (3.787,1.084) to (4.074,0.918);
    
    \draw[black, thick] (y3) to (3.287,0.166);
    \draw[black, thick, dotted] (3.287,0.166) to (3.574,0.333);
    
    \draw[black, thick] (x3) to (-0.333,0);
    \draw[black, thick, dotted] (-0.333,0) to (-0.667,0);
    \draw[black, thick] (x3) to (-0.287,0.166);
    \draw[black, thick, dotted] (-0.287,0.166) to (-0.574,0.333);
    
    \draw[black, thick] (x2) to (-0.583,0.75);
    \draw[black, thick, dotted] (-0.583,0.75) to (-0.916,0.75);
    
    \draw[black, thick] (x1) to (-0.287,1.834);
    \draw[black, thick, dotted] (-0.287,1.834) to (-0.574,1.667);
    
    \node at (1.5,0.75) {\textcolor{chroma4green}{$\bridge$}};
    
    \node at (-1.5,0.75) {$G_1$};
    \node at (4.5,0.75) {$G_2$};

\end{tikzpicture}
\caption{Connecting two graphs by the $\bridge$ operation.}
\label{fig:bridgeOperation}
\end{figure}

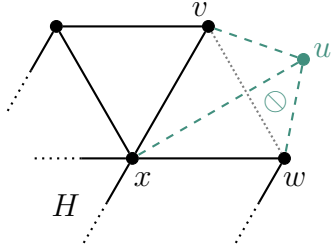
\begin{figure}
\centering
\begin{tikzpicture}[scale=0.25]
    \node[draw=black, fill=black, inner sep=0.55mm, circle] (v) at (4,7) { };
    \node[draw=black, fill=black, inner sep=0.55mm, circle] (w) at (8,0) { };
    \node[draw=black, fill=black, inner sep=0.55mm, circle] (x) at (0,0) { };
    \node[draw=black, fill=black, inner sep=0.55mm, circle] (y) at (-4,7) { };
    \node[draw=chroma4green, fill=chroma4green, inner sep=0.55mm, circle] (u) at (9,5.25) { };
    
    \draw[black, thick] (x) to (y);
    \draw[black, thick] (x) to (v);
    \draw[black, thick] (x) to (w);
    \draw[black, thick] (y) to (v);
    
    \draw[black, thick] (y) to (-5.333,4.667);
    \draw[black, thick, dotted] (-5.333,4.667) to (-6.667,2.334);
    \draw[black, thick] (x) to (-1.333,-2.333);
    \draw[black, thick, dotted] (-1.333,-2.333) to (-2.667,-4.667);
    \draw[black, thick] (x) to (-2.667,0);
    \draw[black, thick, dotted] (-2.667,0) to (-5.333,0);
    \draw[black, thick] (w) to (6.667,-2.333);
    \draw[black, thick, dotted] (6.667,-2.333) to (5.333,-4.667);
    
    \draw[gray, thick, densely dotted] (v) to (w);
    \draw[chroma4green, thick, dashed] (u) to (v);
    \draw[chroma4green, thick, dashed] (u) to (w);
    \draw[chroma4green, thick, dashed] (u) to (x);
    
    \node at (-3.5,-2.5) {$H$};
    \node at (0.55,-1.1) {$x$};
    \node at (3.55,7.9) {$v$};
    \node at (8.55,-1.1) {$w$};
    \node at (10,5.75) {\textcolor{chroma4green}{$u$}};
    \node at (7.6,3) {\textcolor{chroma4green}{$\spoke$}};
\end{tikzpicture}
\caption{Enlarging a graph by the $\spoke$ operation.}
\label{fig:spokeOperation}
\end{figure}

We know that all $3$-connected, $3$-regular graphs are \kvertexConnected{3}. However, as is illustrated by Figure~\ref{fig:vertexRegularNotEdgeRegular}, not all \kvertexConnected{3} graphs are $3$-regular. Tutte~\cite{tutte1966connectivity} beautifully characterized all $3$-connected, $3$-regular graphs as the ones obtainable from $K_4$ by repeated \emph{joining} of edges. Here, joining two edges means to subdivide the edges and to join the arising vertices. In some sense the operation $\spoke$ is a variant of that operation, as we may interpret it as joining an edge to a vertex. As a first step towards proving that $\mathcal{G}$ is the class of all \kvertexConnected{3} graphs, we show that the operations $\bridge$ and $\spoke$ preserve uniform~$3$-connectivity. To this end, we call a $3$-cut in a graph \emph{degenerate} if one side of the cut consists of exactly one vertex.

\begin{lemma}
\label{lem:oplus_preserves_connectivity}
Let $G_1, G_2$ be simple graphs and $H\in G_1\bridge G_2$. Then $G_1$ and $G_2$ are \kvertexConnected{3} if and only if $H$ is \kvertexConnected{3}.
\end{lemma}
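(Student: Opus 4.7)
The plan is to invoke Corollary~\ref{cor:3edge3vertex} and split the statement into $3$-connectivity and \kedgeConnected{3}-ness. The structural key is that the three bridge edges $F=\{x_1y_1,x_2y_2,x_3y_3\}$ form a $3$-edge-cut of $H$ separating $V(G_1)\setminus\{v_1\}$ from $V(G_2)\setminus\{v_2\}$. Hence every path in $H$ whose endpoints lie on the same side of $F$ uses an even number of bridge edges (so $0$ or $2$, since $|F|=3$), while paths with endpoints on opposite sides use an odd number (so $1$ or $3$). I will also repeatedly use that $G_i-v_i$ is $2$-connected, since each $G_i$ is $3$-connected.

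For the forward direction, $H$ is simple because Lemma~\ref{lem:vertexRegularSimple} rules out parallel edges inside each $G_i-v_i$ and the bridge edges join vertices on opposite sides of $F$. I would establish $3$-connectivity of $H$ by contradiction. A hypothetical $2$-separator $\{a,b\}$ with $a,b$ on opposite sides, say $a$ on the $G_1$ side and $b$ on the $G_2$ side, cannot exist: each side of $F$ minus one vertex is connected by $2$-connectivity of $G_i-v_i$, and at least one bridge edge $x_iy_i$ with $x_i\neq a$ and $y_i\neq b$ remains intact. A hypothetical $2$-separator $\{a,b\}$ with both vertices on, say, the $G_1$ side lifts to a $2$-separator of $G_1$: since each $x_i\notin\{a,b\}$ is a neighbor of $v_1$, the $x_i$ all lie in $v_1$'s component of $G_1-\{a,b\}$, so any component of $H-\{a,b\}$ disjoint from the $G_2$-side contains no bridge edge and therefore is a component of $G_1-\{a,b\}$ disjoint from $v_1$, contradicting $3$-connectivity of $G_1$. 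For \kedgeConnected{3}-ness of $H$, I count bridge edges used by a hypothetical family of $4$ edge-disjoint $a$-$b$ paths: if $a,b$ lie on opposite sides, the four paths collectively need at least $4$ bridge edges, contradicting $|F|=3$; if $a,b$ lie on the same side, at most one path uses bridge edges (two such paths would need $\ge 4$), and I replace its single $G_2$-detour $x_i\to y_i\to\cdots\to y_j\to x_j$ by the segment $x_iv_1x_j$, producing $4$ edge-disjoint $a$-$b$ walks in $G_1$. Reducing walks to paths preserves edge-disjointness and contradicts \kedgeConnected{3}-ness of $G_1$.

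For the reverse direction, $3$-connectivity of $G_1$ follows from the fact that $G_1$ is obtained from $H$ by contracting the connected set $V(G_2)\setminus\{v_2\}$ to the single vertex $v_1$, and contraction of a connected subset in a $3$-connected graph retaining at least three vertices outside yields another $3$-connected graph. For \kedgeConnected{3}-ness of $G_1$, $4$ edge-disjoint $a$-$b$ paths with $a=v_1$ or $b=v_1$ are impossible since $\deg_{G_1}(v_1)=3$; otherwise at most one of the $4$ paths traverses $v_1$ (two would require $\ge 4$ of the $3$ edges incident to $v_1$), and I reroute its segment $x_iv_1x_j$ through $H$ as $x_iy_iPy_jx_j$ with $P$ a $y_i$-$y_j$ path in the connected graph $G_2-v_2$, yielding $4$ edge-disjoint $a$-$b$ walks in $H$ that reduce to paths, contradicting \kedgeConnected{3}-ness of $H$. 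The argument for $G_2$ is symmetric. The main obstacle I anticipate is the boundary case analysis when separator vertices or path endpoints coincide with some $x_i$ or $y_i$: one must verify that the rerouting constructions (``through $v_1$'' in the forward direction and ``through $G_2-v_2$'' in the reverse) still produce legitimate edge-disjoint systems of paths, and that enough bridge edges survive to keep the two sides of $F$ linked in $H-\{a,b\}$.
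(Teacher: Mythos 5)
Your overall strategy is legitimate and genuinely different from the paper's: you reduce the claim via Corollary~\ref{cor:3edge3vertex} to checking $3$-connectivity and uniform $3$-edge-connectivity separately, and you control the edge-disjoint path counts by the parity of crossings of the $3$-edge-cut $F$, whereas the paper argues directly with independent paths in both directions. Your forward direction and both ``no four edge-disjoint paths'' arguments are sound.

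There is, however, a genuine gap in the reverse direction, where you derive $3$-connectivity of $G_1$ from the claim that contracting a connected vertex set in a $3$-connected graph, keeping at least three vertices outside, again yields a $3$-connected graph. That claim is false: in the triangular prism (triangles $123$ and $456$ with matching edges $14$, $25$, $36$), contracting the edge $12$ leaves the vertex $3$ with only two neighbors, so the result is not $3$-connected even though four vertices remain outside the contracted set. In your specific situation the degree failure cannot occur (each $x_i$ has exactly one neighbor across $F$), but what the contraction argument genuinely leaves open is a $2$-separator of $G_1$ \emph{containing} $v_1$, i.e., a vertex $b$ for which $(G_1-v_1)-b$ is disconnected; this corresponds to $H$ minus the whole $G_2$-side minus $b$ being disconnected, which is not contradicted by $3$-connectivity of $H$, since the two pieces could still be joined through the $G_2$-side. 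The conclusion is true and can be repaired: a component $C$ of $(G_1-v_1)-b$ containing no $x_i$ makes $b$ a cutvertex of $H$; a component containing exactly one $x_i$ either equals $\{x_i\}$, forcing $\deg_H(x_i)\le 2$, or yields the $2$-separator $\{b,x_i\}$ of $H$; and since there are only three vertices $x_i$, any disconnection has a component of one of these types. You need this (or the paper's path-lifting argument, which converts three independent paths in $H$ into three in $G_1$ and treats the endpoint case $v=v_1$ via three independent paths to $y_1$) in place of the contraction principle; the same repair is what entitles you to call $G_2-v_2$ connected in your rerouting step.
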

\begin{proof}
Let $v_1\in V(G_1)$ and $v_2\in V(G_1)$ with $N(v_1)=\{x_1,x_2,x_3\}$ and $N(v_2)=\{y_1,y_2,y_3\}$ such that $H= (G_1-v_1)\cup (G_2-v_2) + x_1y_1 + x_2y_2 + x_3y_3$. 

First assume that $G_1$ and $G_2$ are \kvertexConnected{3}. Suppose for the sake of contradiction that $H$ contains a $2$-separator $S$ that separates the vertices $v,w\in V(H)$. If $v,w\in V(G_l)$ for some $l\in\{1,2\}$, say $v, w\in V(G_1)$, then in $G_1-S$, there exists a path $P$ connecting $v$ and $w$ as $G_1$ is $3$-connected. If $P$ does not contain the vertex $v_1$, it also exists in $H-S$ yielding a contradiction to $S$ separating $v$ and $w$ in $H$. If $P$ contains $v_1$, then $S\subseteq V(G_1)$ and we may replace the subpath $x_{i}v_1x_{j}$ in $P$ by a path $x_{i}y_{i}P^\prime y_{j}x_{j}$, where $P^\prime$ is a $y_{i}$-$y_{j}$ path in $G_2-v_2$. The resulting path also exists in $H-S$ again yielding a contradiction. Thus, we may assume that $v\in V(G_1)$ and $w\in V(G_2)$. In this case either $v$ and $v_1$ are separated by $S$ in $G_1$ or $w$ and $v_2$ are separated by $S$ in $G_2$. In both cases we get a contradiction to $G_1$ or $G_2$ being $3$-connected.
To see that $H$ is also \kvertexConnected{3}, suppose for the sake of contradiction that there exist four independent paths between two vertices $v,w\in V(H)$. By the definition of the operation $\bridge$, we may assume that $v,w\in V(G_1)$ and at most one of the four independent paths, say $P$, may use edges from $E(G_2)$. We may replace the subpath of $P$ that uses edges not contained in $E(G_1)$ by a path $x_{i}v_1x_{j}$ yielding a path in $G_1$. As this path is still independent to the other three paths, we found four independent $v$-$w$ paths in $G_1$, which is a contradiction.

Let now $H$ be \kvertexConnected{3}. We begin by proving that $G_1$ and $G_2$ are $3$-connected. As the cases are symmetric it suffices to show that $G_1$ is $3$-connected. To this end let $v, w\in V(G_1)$ be chosen arbitrarily. If $v_1\in\{v,w\}$, say $v_1=v$, we may use three independent $w$-$y_1$ paths in $H$ to define three independent $w$-$v$ paths in $G_1$. So let $v_1\notin\{v,w\}$ and consider three independent $v$-$w$ paths $P_1$, $P_2$, $P_3$ in $H$. Only one of the paths may use edges from $\{x_1y_1, x_2y_2, x_3y_3\}$. Thus, similarly to above we can define three independent $v$-$w$ paths in $G_1$ replacing the subpath not contained in $G_1$ by an adequately chosen path $x_{i}v_1x_{j}$. Thus, between any two vertices in $G_1$ there exist three independent paths and $G_1$ is $3$-connected by Menger's Theorem.
Now suppose there exist $v, w\in V(G_1)$ that are connected by four independent paths. As $v_1$ is of degree $3$, we have $v_1\notin\{v,w\}$ and only one of the four paths may touch $v_1$. Thus, again we may define four independent paths in $H$ yielding a contradiction.
\end{proof}
\begin{lemma}\label{lem:spoke_preserves_connectivity}
Let $G$ be a simple graph and $H\in\spoke(G)$.
\begin{enumerate}
    \item If $G$ is \kvertexConnected{3}, then $H$ is \kvertexConnected{3}.\label{claim:2}
    \item If every $3$-cut in $H$ is degenerate and  $H$ is \kvertexConnected{3}, then $G$ is \kvertexConnected{3}.\label{claim:3}
\end{enumerate}
\end{lemma}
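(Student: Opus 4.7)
My plan is to treat each part by separating the ``at most three independent paths'' upper bound (a degree count) from the ``at least three independent paths'' lower bound (a connectivity argument). For the upper bound, observe that the spoke operation preserves the degrees of $v$ and $w$ (each loses an edge to the other but gains an edge to $u$) and of every vertex outside $\{v,w,x\}$, while $u$ itself has degree three. Combined with the precondition $\deg_G(y)=3$ for every $y\ne x$, this forces every vertex distinct from $x$ in either $G$ or $H$ to have degree exactly three. So each pair of distinct vertices contains a vertex of degree three, and between any two vertices at most three independent paths can exist.

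For part~(i) the remaining task is to prove that $H$ is 3-connected, and I will argue by contradiction, distinguishing cases on a putative 2-separator $S\subseteq V(H)$. If $u\in S$, say $S=\{u,a\}$, then $H-S=(G-vw)-a$; when $a\in\{v,w\}$ this just equals $G-a$, and when $a\notin\{v,w\}$ the 3-connectivity of $G$ implies that $G-a$ is 2-vertex-connected, hence 2-edge-connected, so removing the single edge $vw$ keeps it connected. If $u\notin S$, then $u$ has at least one of its neighbors $v,w,x$ outside $S$ and acts as a ``repair vertex'' that reconnects any components that the removal of $vw$ could have split off from $G-S$. A short subcase analysis on $|S\cap\{v,w,x\}|$ covers all possibilities.

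For part~(ii) the key step is proving $G$ is 3-connected, which I plan to do by contrapositive: from a separator of $G$ of size at most two I construct a 3-cut of $H$ that is not degenerate, contradicting the hypothesis. Separators of size zero or one are excluded immediately because $H-u$ is 2-connected (as $H$ is 3-connected) and $G=(H-u)+vw$. For a putative 2-separator $S=\{s_1,s_2\}$ with components $A_1,\dots,A_m$ of $G-S$, the cut $(A_i,V(H)\setminus A_i)$ in $H$ has exactly
\[
|E_G(A_i,S)|-[vw\in E_G(A_i,S)]+|A_i\cap\{v,w,x\}|
\]
edges, since the only differences between $G$ and $H$ are the removal of $vw$ and the insertion of $u$ adjacent to $v,w,x$. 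The plan is to locate one $A_i$, or the slight enlargement $A_i\cup\{u\}$, for which this count equals three while both sides of the cut have size at least two, thereby violating the degeneracy hypothesis. The main obstacle will be the case distinction by how $v, w$ and $x$ distribute among $S$ and the $A_i$: the inputs I expect to combine are the degree sum $\deg_G(s_1)+\deg_G(s_2)\le 6$ (bounding the total number of edges leaving $S$), the 3-edge-connectivity of $H$ (ruling out cuts of size less than three), and the simplicity of $G$ (bounding the number of edges between $S$ and a singleton component). In every configuration one either extracts the forbidden non-degenerate 3-cut directly, or is forced into a 1-separator of $G$, which has already been excluded.
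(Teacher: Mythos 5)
Your treatment of Claim (i) is sound: the degree count settles the upper bound of three independent paths, and your two cases for a putative $2$-separator $S$ of $H$ both work (for $u\in S$, $G-a$ is $2$-connected and hence has no bridge, so deleting $vw$ keeps it connected; for $u\notin S$, deleting $vw$ splits the connected graph $G-S$ into at most two pieces, each containing one of $v,w$, and $u$ re-joins them). This is the paper's path-rerouting argument recast in separator language. For Claim (ii) your overall strategy---convert a $2$-separator of $G$ into a non-degenerate $3$-cut of $H$---is also the paper's, but two of your announced inputs do not survive the case analysis. The bound $\deg_G(s_1)+\deg_G(s_2)\le 6$ fails when $x\in S$, since $x$ is the one vertex of unbounded degree; that case needs no $3$-cut at all, because then $u$'s surviving neighbours lie in $\{v,w\}$, which sit in a single component of $G-S$ (as $vw\in E(G)$), so $S$ is still a $2$-separator of $H$. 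The same direct argument kills every configuration except: exactly two components, $x\in A_2$, at least one of $v,w$ in $A_1$, and $s_1,s_2\ne x$ both of degree three with at least one edge into each $A_i$.

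The genuine gap is that in this surviving configuration your repertoire of cuts---around $A_i$ or around $A_i\cup\{u\}$---need not contain a $3$-cut. Those cuts have $|E_G(A_2,S)|+1$ and $|E_G(A_1,S)|+1$ edges respectively, and in the crossing configuration where $s_1$ sends one edge to $A_1$ and two to $A_2$ while $s_2$ sends two to $A_1$ and one to $A_2$, both counts are four, and no $1$-separator of $G$ arises either, so your stated escape hatches all fail. You must move a separator vertex across: the cut around $A_2\cup\{s_1\}$ has exactly three edges (the single edge from $s_1$ towards $A_1\cup\{u\}$, the single $s_2$--$A_2$ edge, and $ux$) and both sides have at least two vertices. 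This is precisely the paper's device: for each degree-three separator vertex it isolates the single edge that vertex sends to one side and combines these two edges with $ux$ into the forbidden cut. Finally, be aware that non-degeneracy of the cut around $A_2$ in the case $|E_G(A_2,S)|=2$ requires $A_2\ne\{x\}$, i.e.\ $\deg_G(x)\ge 3$; this is not literally forced by the lemma's hypotheses (taking $H$ to be the Petersen graph and $u$ any of its vertices shows Claim (ii) can fail when $\deg_G(x)=2$), but it holds in the lemma's application, and the paper's proof also uses it tacitly via the assertion that $x$ has degree larger than three in $H$.
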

\begin{proof}
Let $v, w\in V(G)$, $N(v)=\{w, x, y\}$, $N(w)=\{v,x,z\}$, such that $H=G+u-vw+uw+uv+ux$ with $u\notin V(G)$.

We begin by proving Claim~\ref{claim:2}. So let $G$ be \kvertexConnected{3}. Note that $H$ contains exactly one vertex of degree larger than $3$. As a consequence, the maximum number of independent paths between any pair of vertices in $H$ is at most three. Thus, it suffices to prove that $H$ is $3$-connected. Let $a,b\in V(G)$ be chosen arbitrarily. We show that there exist three independent $a$-$b$ paths in $H$. Since $G$ is $3$-connected, there exist three independent $a$-$b$ paths in $G$. If the edge $vw$ does not occur in one of these paths, there is nothing to show, as $vw$ is the only element in $G$ that does not exist in $H$. If $vw$ occurs in one of the paths, we may safely replace the edge $vw$ by $vuw$, as the vertex $u$ and the edges $vu$ and $uw$ do not exist in $G$. Thus, there is no $2$-separator in $H$ that separates two vertices in $V(G)$. Now suppose there exists a $2$-separator $S$ separating $u$ and some other vertex $a\in V(H)$. As $u$ has three neighbors, in $G-S$ it is contained in a component with more than one vertex. This implies that $S$ also separates two vertices $x, y\in V(G)$ in $H$, which is not possible by previous arguments. Thus, $H$ is $3$-connected and as a consequence also \kvertexConnected{3}.

We now turn to Claim~\ref{claim:3}. So assume that $H$ is \kvertexConnected{3} and only contains degenerate $3$-cuts. Again we only need to show that $G$ is $3$-connected as at most one vertex has a degree larger than $3$ and therefore between any pair of vertices there are not more than three independent paths. We begin by showing that $x$ may not be separated from any other vertex in $G$ by removing two vertices. Suppose in $G-w_1-w_2$ there is no $x$-$a$ path for some $a\in V(H)$ and $w_1,w_2\in V(G)\setminus\{x, a\}$. Denote by $G_x$ the component of $x$ and $G_a$ the component of $a$ in $G-w_1-w_2$. See Figure~\ref{fig:spoke_preserves_proof} for an example of this separation. For $i\in\{1,2\}$ the vertex $w_i$ is of degree $3$ and we therefore have
\begin{equation*}
    \min\big\{E\big(\{w_i\},V(G_x)\big), E\big(\{w_i\}, V(G_a)\big)\big\}=1.
\end{equation*}
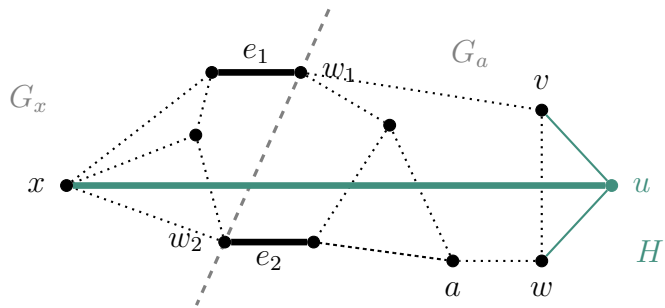
\begin{figure}
    \centering
    \begin{tikzpicture}
        \node[vertex] (x) {};
        \labelLeft{x}{$x$}
        \node[vertex, right=7cm of x, chroma4green] (u) {};
        \labelRight{u}{\textcolor{chroma4green}{$u$}}
        \node[vertex, left=.75cm of u, yshift=1cm] (v) {};
        \labelAbove{v}{$v$}
        \node[vertex, left=.75cm of u, yshift=-1cm] (w) {};
        \labelBelow{w}{$w$}
        
        \node[vertex, left=3cm of v, yshift=.5cm] (w1) {};
        \labelRight{w1}{$w_1$}
        \node[vertex, left=4cm of w, yshift=.25cm] (w2) {};
        \labelLeft{w2}{$w_2$}
        \node[vertex, left=1cm of w] (y) {};
        \labelBelow{y}{$a$}
        \node[vertex, left=1cm of w1] (a1) {};
        \node[vertex, right=1cm of w2] (a2) {};
        \node[above=.75cm of x, xshift=-.5cm, gray] {$G_x$};
        \node[right=1.75cm of w1, yshift=.25cm, gray] {$G_a$};
        \node[below=.5cm of u, xshift=.5cm, chroma4green] {$H$};
        
        \node[vertex, right=1cm of w1, yshift=-.7cm] (1) {};
        \node[vertex, above=.5cm of x, xshift=1.7cm] (2) {};
        
        \draw[edge, dotted] (x) -- (a1) -- (2) -- (w2) -- (x);
        \draw[edge, dotted] (x) -- (2);
        \draw[edge, dotted] (w1) -- (v) -- (w) -- (y) (w1) -- (1) -- (a2);
        \draw[edge, dotted] (y) -- (1);
        \draw[edge, dotted] (a2) -- (y);
        \draw[edge, dotted] (y) -- (a2);
        
        \draw[edge, line width=2.5pt] (a1) -- (w1) node[midway, yshift=.25cm] {$e_1$};
        \draw[edge, line width=2.5pt] (a2) -- (w2) node[midway, yshift=-.25cm] {$e_2$};
        \begin{pgfonlayer}{background}
        \draw[dashed, very thick, gray, shorten >=-1.0cm,shorten <=-1.0cm] (w1) -- (w2);
        \end{pgfonlayer}
        \draw[edge, chroma4green] (v) -- (u) -- (w);
        \draw[edge, line width=2.5pt, chroma4green] (x) -- (u);

    \end{tikzpicture}
    \caption{Supposed $2$-separation of $x$ and $y$ in the proof of Lemma~\ref{lem:spoke_preserves_connectivity}. The thick edges indicate a $3$-cut. Edges are dotted to indicate that the depicted picture is not the only possible situation in the proof.}
    \label{fig:spoke_preserves_proof}
\end{figure}%
Denote by $e_i$ the single edge connecting $w_i$ to the respective component $G_x$ or $G_a$. Then $\{e_1, e_2\}$ is a $2$-cut in $G$. If $vw\notin\{e_1, e_2\}$ the set $E^\prime=\{e_1, e_2, xu\}$ separates $x$ and $a$ in $H$, which is illustrated in Figure~\ref{fig:spoke_preserves_proof}. As $H$ only contains degenerate $3$-cuts, all of the edges in $E^\prime$ are incident to the same degree $3$ vertex. As $x$ is of degree larger than $3$ in $H$ and $e_1$ and $e_2$ are not incident to $u$, this yields a contradiction. If $vw\in\{e_1,e_2\}$, then $E^\prime=\{e_1, e_2, xu\}\setminus\{vw\}\cup\{vu\}$ separates $x$ and $a$ in $H$. Again, one of the edges in $E^\prime$ is not incident to $u$ and therefore $E^\prime$ is not degenerate, which is a contradiction. Thus, we have shown that $x$ cannot be separated from any other vertex in $G$ by a $2$-separator. However, note that $x$ may also not be contained in a $2$-separator in $G$, as this separator would also be a separator in $H$. This yields a contradiction and we conclude that $G$ is $3$-connected, which completes the proof.
\end{proof}

Note that in Claim~\ref{claim:3} of Lemma~\ref{lem:spoke_preserves_connectivity}, the condition that $H$  only contains degenerate $3$-cuts is in fact necessary: Any graph in $W_4\bridge W_5$ serves as a counterexample if the condition is dropped, where $W_n$ denotes a wheel graph on $n$ vertices.

\begin{theorem}\label{thm:classG}
The class $\mathcal{G}$ contains exactly all \kvertexConnected{3} graphs.
\end{theorem}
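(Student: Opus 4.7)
The plan is to prove the equivalence by induction in both directions. For the inclusion $\mathcal{G}\subseteq\{\kvertexConnected{3}\text{ graphs}\}$ I would induct on the recursive construction of $\mathcal{G}$. The base class of $3$-connected, $3$-regular graphs is \kvertexConnected{3} because Menger's Theorem supplies at least three independent paths between any two vertices while the degree bound of three forbids four edge-disjoint (and hence four independent) paths. The two inductive steps are then exactly Lemma~\ref{lem:oplus_preserves_connectivity} for the $\bridge$-operation and Claim~\ref{claim:2} of Lemma~\ref{lem:spoke_preserves_connectivity} for the $\spoke$-operation.

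For the reverse inclusion I would induct on $|V(G)|$. The base case $|V(G)|=4$ forces $G=K_4$, a $3$-regular member of $\mathcal{G}$. For $|V(G)|\ge 5$ I would first treat the case in which $G$ admits a non-degenerate $3$-cut. Here the key observation is that every non-degenerate $3$-cut in a simple $3$-connected graph is a matching: if two cut edges shared an endpoint $a$ on one side (the third having endpoint $a'$, possibly $a=a'$), then $\{a\}$ or $\{a,a'\}$ would separate the two sides, contradicting $3$-connectivity. With such a clean cut, contracting each side to a new degree-three vertex joined to the three opposite cut endpoints produces simple graphs $G_1,G_2$ with $G\in G_1\bridge G_2$; Lemma~\ref{lem:oplus_preserves_connectivity} certifies that $G_1,G_2$ are \kvertexConnected{3}, and since both are strictly smaller the induction hypothesis gives $G_1,G_2\in\mathcal{G}$ and hence $G\in\mathcal{G}$.

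If instead every $3$-cut of $G$ is degenerate, then either $G$ is $3$-regular and lies in the base of $\mathcal{G}$, or $G$ has exactly one vertex $x$ of degree greater than three. The uniqueness follows from Theorem~\ref{thm:3vertexRegularIs3edgeRegular}: two high-degree vertices $v,w$ would admit a minimum $v$-$w$ cut of size three, and the condition $\deg(v),\deg(w)>3$ would force both sides of that cut to contain at least two vertices, contradicting the case hypothesis. Next, for any $u\in N(x)$ the two non-$x$ neighbors $v,w$ of $u$ must be non-adjacent: otherwise the set $\{u,v,w\}$ would be a side of a $3$-cut whose three cut edges are $ux$, the third edge at $v$, and the third edge at $w$, and this cut would be non-degenerate since $|V(G)\setminus\{u,v,w\}|\ge 2$, again contradicting the case hypothesis. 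Thus $G':=G-u+vw$ is simple and arises from $G$ by the inverse of $\spoke$; Claim~\ref{claim:3} of Lemma~\ref{lem:spoke_preserves_connectivity} guarantees that $G'$ is \kvertexConnected{3}, induction places $G'\in\mathcal{G}$, and therefore $G\in\spoke(G')\subseteq\mathcal{G}$.

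The main obstacle I expect is the joint cut-analysis sitting behind the two subcases of the reverse inclusion: verifying the matching property of non-degenerate $3$-cuts in Case 1 and simultaneously extracting, from the degeneracy of all $3$-cuts in Case 2, both the uniqueness of the high-degree vertex and the absence of an edge between the non-$x$ neighbors of any degree-three $u\in N(x)$. Each of these structural facts is obtained by exhibiting the hypothetical failure as a forbidden small cut or separator, but one must carefully check endpoint multiplicities, the bound $|V(G)|\ge 5$ ensuring non-degeneracy of the exhibited cut, and simplicity of the decomposed pieces. Once these are established, the strict decrease of $|V|$ and the correct degree profile demanded by $\bridge^{-1}$ and $\spoke^{-1}$ are routine to verify.
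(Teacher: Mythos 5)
Your proposal is correct and takes essentially the same approach as the paper: the same induction on $|V(G)|$ with base case $K_4$, the same case split on the existence of a non-degenerate $3$-cut, and the same reductions inverting $\bridge$ and $\spoke$, justified by Lemmas~\ref{lem:oplus_preserves_connectivity} and~\ref{lem:spoke_preserves_connectivity}. The only difference worth noting is in the matching property of a non-degenerate $3$-cut: the paper takes the third cut edge's endpoint on the \emph{opposite} side, which sidesteps the corner case (excluded anyway by the minimum-degree condition) in which your pair $\{a,a'\}$ exhausts one side of the cut.
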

\begin{proof}
First, we observe that any simple $3$-regular $3$-connected graph is \kvertexConnected{3}. Furthermore, by Lemma~\ref{lem:oplus_preserves_connectivity} and~\ref{lem:spoke_preserves_connectivity}, both operations $\bridge$ and $\spoke$ preserve uniform $3$-connectivity. Therefore any graph in $\mathcal{G}$ is \kvertexConnected{3}.

We now turn to the other direction and prove that any \kvertexConnected{3} graph $G$ is contained in $\mathcal{G}$. We prove the claim by induction on $|V(G)|$. If $|V(G)|=4$, we have $G=K_4$ and the claim holds true. So assume $|V(G)|\geq 5$. We distinguish two cases.

In the first case, assume that $G$ contains a non-degenerate $3$-cut $E^\prime=\{e_1, e_2, e_3\}$. We claim that in this case no vertex in $G$ is incident to more than one edge in $E^\prime$. If a vertex is incident to all edges in $E^\prime$ but not of degree $3$, then removing this vertex from the graph separates the graph, which is not possible as $G$ is $3$-connected. Now suppose a vertex $v$ is incident to exactly two edges of $E^\prime$, say $e_1$ and $e_2$. Let $x$ be the vertex incident to $e_3$ that is not contained in the same component as $v$ in $G-E^\prime$. As $x$ is not incident to at least one of the edges $e_1$ and $e_2$ and $v$ is not incident to $e_3$, the graph $G-x-v$ is not connected. This contradicts the fact that $G$ is $3$-connected. Thus, we may assume that the endvertices of the edges in $E^\prime$ are distinct. Denote by $X$ and $Y$ the two components of $G-E^\prime$. Furthermore, denote for $i\in\{1,2,3\}$ by $x_i$ the distinct endvertices of $e_i$ in $X$ and by $y_i$ the distinct endvertices of $e_i$ in $Y$. Then for vertices $v_1,v_2\notin V(G)$ it is $G\in G_1\bridge G_2$, where $G_1=X+v_1+x_1v_1+x_2v_1+x_3v_1$ and $G_2=Y+v_2+y_1v_2+y_2v_2+y_3v_2$. Lemma~\ref{lem:oplus_preserves_connectivity} then implies that $G_1$ and $G_2$ are \kvertexConnected{3}. Both graphs $G_1$ and $G_2$ contain fewer vertices than $G$. It follows by induction that $G_1, G_2\in\mathcal{G}$ and therefore $G\in\mathcal{G}$.

In the second case, assume that every $3$-cut in $G$ is degenerate. If $G$ does not contain a vertex of degree at least $4$, it is $3$-regular and therefore contained in $\mathcal{G}$. Next, we show that at most one vertex in $G$ is of degree larger than $3$. Suppose there are two vertices of degree larger than $3$, say $v$ and $w$. As $G$ is \kvertexConnected{3}, by Corollary~\ref{cor:3edge3vertex}, there exists a $3$-cut separating $v$ and $w$. As $v$ and $w$ are of degree $4$, all edges of this cut must be incident to some degree $3$ vertex $x\notin\{v,w\}$. Then $x$ separates $v$ and $w$ in $G$ which is not possible. Thus we denote by $x$ the unique vertex in $G$ that has degree at least $4$. Let $u$ be a neighbor of $x$ and let $N(u)=\{x, v, w\}$. Suppose $vw\in E(G)$ which implies that $uvwu$ is a triangle. Then $E^\prime\coloneqq E(\{u,v,w\}, V(G)\setminus\{u,v,w\})$ contains exactly three edges and separates $\{u,v,w\}$ from $x$. As $ux\in E^\prime$ and $G$ only contains degenerate~$3$-cuts, this is a contradiction. Thus, we have $vw\notin E(G)$ which implies that $H\coloneqq G-u+vw$ is simple. We obtain that $G \in \spoke(H)$ and by Lemma~\ref{lem:spoke_preserves_connectivity} that $H$ is \kvertexConnected{3}. As $H$ contains fewer vertices than $G$, by induction, $H$ is contained in $\mathcal{G}$ and, thereby, so is $G$.
\end{proof}

We have seen that the operations $\bridge$ and $\spoke$ give us the means to construct all \kvertexConnected{3} graphs. In the next section, we see an example for how this construction may be used in order to derive further structural properties of \kvertexConnected{3} graphs.

\section{Vertices of minimum degree}\label{sec:degrees}
In this section, we provide a tight lower bound on the number of vertices of minimum degree in \kvertexConnected{3} graphs. We denote this parameter for a graph~$G$ by
\begin{equation*}
    \nu(G)\vcentcolon=\big|\big\{v\in V(G):\deg(v)=\min_{\mathclap{v\in V(G)}}\deg(v)\big\}\big|.
\end{equation*}

\begin{theorem}\label{thm:boundNu3}
Let~$G$ be a \kvertexConnected{3} graph on $n$ vertices. Then
\begin{equation*}
    \nu(G) \ge \frac{2n+2}{3}.
\end{equation*}
\end{theorem}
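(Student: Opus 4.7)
The plan is to proceed by induction on $n=|V(G)|$, using the constructive characterization of Theorem~\ref{thm:classG} to reduce $G$ to strictly smaller uniformly $3$-connected graphs. For the base case I simply observe that if $G$ is $3$-regular (in particular if $G=K_4$), then every vertex has the minimum degree, so $\nu(G)=n$ and the desired inequality $n\geq(2n+2)/3$ amounts to the trivial $n\geq 2$. For $n\geq 5$ with $G$ not $3$-regular, Theorem~\ref{thm:classG} represents $G$ as the output of either the $\bridge$ or the $\spoke$ operation applied to smaller uniformly $3$-connected graphs, and it remains to show that each of these operations preserves the bound.

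For the $\bridge$ step, write $G\in G_1\bridge G_2$. The operation deletes the degree-$3$ vertices $v_1\in V(G_1)$ and $v_2\in V(G_2)$ and compensates each lost edge by a new matching edge between their neighborhoods, so the degree of every surviving vertex is preserved. Consequently $\nu(G)=\nu(G_1)+\nu(G_2)-2$, while $n=|V(G_1)|+|V(G_2)|-2$. Applying the inductive hypothesis to $G_1$ and $G_2$ and simplifying yields
\begin{equation*}
\nu(G)\geq\frac{2|V(G_1)|+2}{3}+\frac{2|V(G_2)|+2}{3}-2=\frac{2n+2}{3},
\end{equation*}
so the bound is preserved on the nose.

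For the $\spoke$ step, write $G\in\spoke(H)$ with pivot $x$. The construction adds a new degree-$3$ vertex $u$ adjacent to $v,w,x$, removes $vw$, and inserts $uv,uw$, so only $\deg(x)$ changes, increasing by exactly one. Because the rule requires every vertex of $H$ other than $x$ to have degree $3$, two subcases arise. If $\deg_H(x)=3$ then $H$ is $3$-regular and $\nu(H)=n-1$; in $G$ the pivot $x$ leaves the set of degree-$3$ vertices while $u$ joins it, so $\nu(G)=n-1$, and the target $n-1\geq(2n+2)/3$ reduces to $n\geq 5$, which holds since $|V(H)|\geq 4$. If instead $\deg_H(x)\geq 4$, then $x$ remains the unique vertex of degree at least $4$ in $G$ and $\nu(G)=\nu(H)+1\geq 2n/3+1>(2n+2)/3$ by the inductive hypothesis. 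The main obstacle will be precisely this bookkeeping under $\spoke$ when $\deg_H(x)=3$: in that subcase the gain from the new vertex $u$ just offsets the loss from $x$, so the bound becomes tight and hinges on the elementary cardinality estimate $n\geq 5$; everywhere else the accounting is straightforward because $\bridge$ is degree-preserving and the other $\spoke$ subcase gives slack.
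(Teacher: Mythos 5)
Your proposal is correct and takes essentially the same approach as the paper: induction on $n$ via the constructive characterization of Theorem~\ref{thm:classG}, with identical degree bookkeeping in the $\bridge$ step. The only difference is cosmetic — in the $\spoke$ step the paper avoids your subcase split on $\deg_H(x)$ by observing directly that $x$ is the unique vertex of degree exceeding $3$ in $G$, so $\nu(G)=n-1\ge(2n+2)/3$ for $n\ge 5$ in one stroke.
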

\begin{proof} First note that for $3$-regular graphs the statement of the theorem is clearly fulfilled. The only \kvertexConnected{3} graph with $n\le 4$ is the complete graph for which our assertion is correct as it is $3$-regular.

In the following, we argue by induction on $n$. Let $G$ be a \kvertexConnected{3} graph that is not 3-regular. By Theorem~\ref{thm:classG}, $G$ can be decomposed in accordance with operation $\bridge$ or operation $\spokeStandAlone$ into graphs from the class~$\mathcal{G}$, which are again \kvertexConnected{3}.

Let us assume first that $G\in\spoke(H)$ for a graph $H$ from the class $\mathcal{G}$. So we have
\begin{equation*}
    G=H+u-vw+uw+uv+ux
\end{equation*}
for vertices $u\notin V(H)$ and $v,w,x\in V(H)$ with $vw\in E(H)$ and $x$ being the only vertex in $H$ whose degree might be larger than $3$. This construction leads to $\deg(u)=3$ and preserves the degrees of the vertices $v$ and $w$, as it is also illustrated by Figure~\ref{fig:spokeOperation}. So $x$ is the only vertex in $G$ with $\deg(x)>3$. As a consequence, we get in this case
\begin{equation*}
    \nu(G) = n-1 \ge \frac{2n+2}{3}.
\end{equation*}
Let us now assume that $G\in G_1 \bridge G_2$ for two graphs $G_1$ and $G_2$ from the class $\mathcal{G}$. So we have
\begin{equation*}
    G=(G_1-v_1)\cup(G_2-v_2) + x_1y_1 + x_2y_2 + x_3y_3
\end{equation*}
for $v_1\in V(G_1)$ and $v_2\in V(G_2)$ with $N(v_1)=\{x_1,x_2,x_3\}$ and $N(v_2)=\{y_1,y_2,y_3\}$. By this construction, we have~$\deg(v_1) = \deg(v_2) = 3$. Furthermore, operation~$\bridge$ preserves the degrees of all vertices from $G_1$ and $G_2$ that are present in $G$, which is illustrated by Figure~\ref{fig:bridgeOperation}. So only the absence of the vertices~$v_1$ and $v_2$ themselves affects the parameter~$\nu(G)$. Consequently, we get $\nu(G) = \nu(G_1)+\nu(G_2)-2$. Moreover, denoting the number of vertices of $G_1$ and $G_2$ by $n_1$ and $n_2$, respectively, we have $n=n_1+n_2-2$. Hence, the graphs $G_1$ and $G_2$ have fewer vertices than~$G$ and we conclude by induction that
\begin{equation*}
    \nu(G) \ge \frac{2n_1+2}{3} + \frac{2n_2+2}{3} - 2 = \frac{2(n_1 + n_2 - 2) +2}{3} = \frac{2n+2}{3},
\end{equation*}
which finishes the proof.\end{proof}
In fact, this theorem provides a tight bound for general $n$. To examine this, we first observe that $W_{n+1}\in\spoke(W_n)$ for all $n\geq 4$, where $W_n$ denotes the wheel graph on $n$ vertices. Any graph in $\bigoplus_{i=1}^k W_5$ attains the given bound for all $n=5+3k$ with $k\in\mathbb{N}$, any graph in $(\bigoplus_{i=1}^{k-1} W_5) \bridge W_6$ attains the bound for all $n=6+3k$ with $k\in\mathbb{N}$, and any graph in $(\bigoplus_{i=1}^{k-1} W_5) \bridge W_7$ attains our bound for all $n=7+3k$ with $k\in\mathbb{N}$. Here, for graphs $G$, $G_1$,\ldots, $G_k$ it is
\begin{align*}
\{G_1,\ldots, G_k\}\bridge G\coloneqq (G_1\bridge G)\cup (G_2\bridge G)\cup\ldots\cup (G_k\bridge G)
\end{align*}

The analogous problem for \edgeConnected graphs is solved by Kingsford and Mar{\c{c}}ais~\cite{kingsford2009vertices}. They show that $\nu(G)\ge 2$ for a \kedgeConnected{k} graph $G$ for general $k\in\mathbb{N}$. To see that this bound is best possible for an arbitrary number of vertices, consider a graph whose underlying simple graph is a path and each edge has exactly $k-1$ parallels.

\section{Conclusions and related problems}

We studied \kvertexConnected{k} graphs and their relation towards \kedgeConnected{k} graphs. In particular, we thoroughly regarded these classes for $k=3$. We provided a constructive characterization for all \kvertexConnected{3} graphs. This construction is a promising tool for proving further properties of \kvertexConnected{3} graphs. We demonstrated this by utilizing the characterization to derive a tight lower bound on the number of vertices of minimum degree. However, our construction ideas cannot be generalized to $k\geq 4$ directly. So finding a construction for general $k$ remains an interesting problem.

We also showed that any simple, $3$-connected graph in which two vertices are connected by four edge-disjoint paths also contains two vertices that are connected by four independent paths. We gave an example that the same does not hold true for $4$-connected graphs and five edge-disjoint paths. So, more generally, one may ask for sufficient conditions under which edge-disjoint paths guarantee independent paths.

\section*{Acknowledgments}
Our research was partially funded by the Deutsche Forschungsgemeinschaft (DFG, German Research Foundation) -- Project-ID 416228727 -- SFB 1410.

\bibliographystyle{plain}
\bibliography{bibliography}

\end{document}